\newcommand{\arxiv}[1]{\href{http://arxiv.org/abs/#1}{\texttt{arXiv:#1}}}
\newcommand\myeq{\mathrel{\overset{\makebox[0pt]{\mbox{\normalfont\tiny\sffamily d}}}{=}}}
\newcommand{\taua}{\tau^{(1)}}
\newcommand{\taub}{\tau^{(2)}}
\newcommand{\tauc}{\tau^{(3)}}
\DeclareMathOperator{\SD}{\mathbb{SD}}
\DeclareMathOperator{\E}{\mathbb{E}}
\DeclareMathOperator{\bP}{\mathbb P}
\DeclareMathOperator{\LIS}{LIS}
\DeclareMathOperator{\A}{A}
\DeclareMathOperator{\W}{W}
\DeclareMathOperator{\LDS}{LDS}
\DeclareMathOperator{\LAS}{LAS}
\DeclareMathOperator{\Bin}{Bin}
\DeclareMathOperator{\LR}{LR}
\DeclareMathOperator{\RL}{RL}
\DeclareMathOperator{\D}{D}
\DeclareMathOperator{\R}{R}
\DeclareMathOperator{\T}{T}
\DeclarePairedDelimiter\ceil{\lceil}{\rceil}
\DeclarePairedDelimiter\floor{\lfloor}{\rfloor}
\theoremstyle{plain}
\newtheorem{theorem}{Theorem}
\newtheorem{lemma}[theorem]{Lemma}
\newtheorem{corollary}[theorem]{Corollary}
\theoremstyle{definition}
\newtheorem{example}[theorem]{Example}
\theoremstyle{remark}
\newtheorem{remark}[theorem]{Remark}
\title{\bf Longest monotone subsequences and rare regions of pattern-avoiding permutations}
\author{Neal Madras{\thanks{Supported in part by a Discovery Grant from NSERC of Canada.}}\, and G\"{o}khan Y{\i}ld{\i}r{\i}m\\
\small Department of Mathematics and Statistics\\[-0.8ex]
\small York University\\[-0.8ex] 
\small Toronto, Ontario M3J 1P3, Canada\\
\small\tt madras@mathstat.yorku.ca \\
\small\tt gyildir@yorku.ca
}
\begin{document}

\maketitle


\begin{abstract}
We consider the distributions of the lengths of the longest monotone and alternating subsequences
 in classes of permutations of size $n$ that avoid a specific pattern or set of patterns, with
respect to the uniform distribution on each such class.
We obtain exact results for any class that avoids two patterns of length 3, as well as
results for some classes that avoid one pattern of length 4 or more.  
In our results, the longest 
 monotone subsequences have expected length proportional to $n$ for pattern-avoiding classes, 
 in contrast with the $\sqrt n$ behaviour that holds for unrestricted permutations.

In addition, for a pattern $\tau$ of length $k$, we scale the plot of a random $\tau$-avoiding permutation 
down to the unit square and study the ``rare region,'' which is the part of the square that is
 exponentially unlikely to contain any points. 
 We prove that when $\tau_1>\tau_k$, the complement of the rare region is a closed set 
 that contains the main diagonal of the unit square. For the case $\tau_1=k,$ we also show that the lower boundary of the part of the rare region above the main diagonal is a curve that is Lipschitz continuous and strictly increasing on $[0,1]$.

  \bigskip\noindent \textbf{Keywords:} pattern-avoiding permutations, longest increasing subsequence problem, longest alternating subsequence, rare region.
\end{abstract}

\section{Introduction}


For each integer $n\geq 1,$ let $[n]:=\{1,2,\cdots,n\}.$ A permutation of $[n]$ is a bijection 
$\sigma : [n]\to[n],$ written as $\sigma=\sigma_1\sigma_2\cdots\sigma_n$ where 
$\sigma_i=\sigma(i) $ for $i \in [n]$. The set of all permutations on $[n]$ is denoted by $S_n$. 
For $\tau\in S_k$, we write $S_n(\tau)$ to denote the set of all permutations in $S_n$ that
avoid the pattern $\tau$.   (See Section \ref{sec-not} for complete definitions.)

We use $\bP^{\tau}_n$ to denote the uniform probability measure on $S_n(\tau):$ for any subset 
$A$ of $S_n(\tau),$ we have $\bP^{\tau}_n(A):=\frac{|A|}{|S_n(\tau)|}.$ 
We use $\E^{\tau}_n(X)$ and $\SD^{\tau}_n(X)$ to denote the expected value and standard 
deviation of a random variable $X$ on $S_n(\tau)$ under $\bP^{\tau}_n$. 
More generally, if $T$ is a set of patterns, then $S_n(T)$ denotes the set of permutations in 
$S_n$ avoiding all the patterns in $T$,  and $\bP_n^T$, $\E^T_n$ and $\SD^{T}_n$ are the
corresponding probability operators.

The cardinalities $|S_n(T)|$ have been studied extensively by researchers in the last few
decades but have been computed only for some limited cases of sets $T$.

It took 24 years to prove the 1980 conjecture of Stanley and Wilf, which  says that
\begin{equation}
L(\tau):=\lim_{n\to \infty}|S_n(\tau)|^{1/n}\quad \text{exists and is finite for every }\tau \in S_k
\end{equation}
(existence was proved by Arratia \cite{A}, and finiteness by Marcus and Tardos~\cite{MT}). 

For more on pattern-avoiding permutations, see chapters 4 and 5 in \cite{Bo}, \cite{Kitaev}, and the survey paper \cite{Vatter}.

For a permutation $\sigma=\sigma_1 \sigma_2 \cdots \sigma_n \in S_n,$ the \textit{complement} of 
$\sigma$ is $\sigma^c=\sigma^c_1 \sigma^c_2 \cdots \sigma^c_n$ where 
$\sigma^c_i=n+1-\sigma_i$ for $i\in [n].$ The \textit{reverse} of $\sigma$ is defined to be 
$\sigma^r=\sigma_n \sigma_{n-1} \cdots \sigma_1.$ 
These operations give rise to bijections among $S_n(\tau), S_n(\tau^c)$ and $S_n(\tau^r)$.
In addition, inversion gives a bijection from $S_n(\tau)$ to $S_n(\tau^{-1})$.
Therefore,
\[|S_n(\tau)|=|S_n(\tau^c)|=|S_n(\tau^r)|=|S_n(\tau^{-1})|  \,.  \]

\bigskip

The  paper is organized as follows.   Sections \ref{sec-intmono}, \ref{sec-intalt}, and 
\ref{sec-intrare} present the introduction, background, and overview of our results on the
topics of
longest monotone subsequences, longest alternating sequences, and rare regions
respectively.   Section \ref{sec-not} lists some basic notation and terminology.
In sections \ref{sec.tau1tau2} and \ref{LASsection}, we will state and prove our results related to the distribution of longest monotone and alternating subsequences respectively of random permutations from $S_n(\taua,\taub)$ where $\taua,\taub\in S_3$. 
Section \ref{longerpattern} contains some results on patterns of length greater than three. 
Finally, in section \ref{rsection}, we will present our results on the rare regions in 
permutations' plots.

\subsection{Longest monotone subsequences}
    \label{sec-intmono}
For a given $\sigma \in S_n,$ we say that $\sigma_{i_1}\sigma_{i_2}\cdots\sigma_{i_k}$ is an \textit{increasing subsequence of length} $k$ in $\sigma$ if $i_1<i_2<\cdots<i_k$ and $\sigma_{i_1}<\sigma_{i_2}<\cdots<\sigma_{i_k}$. Let $\LIS_n(\sigma)$ be the length of the longest increasing subsequence in $\sigma$. Similarly, let $\LDS_n(\sigma)$ be the length of the longest decreasing subsequence in $\sigma$. 
In a short and elegant argument, usually called the \textit{Erd\"{o}s-Szekeres lemma}, Erd\"{o}s and Szekeres \cite{ES} proved that 
every permutation of length $(r{-}1)(s{-}1)+1$ or more contains either a decreasing subsequence of length $r$ or an increasing subsequence of length $s$; equivalently, 
$\LIS_n(\sigma)\cdot \LDS_n(\sigma)\geq n$ for every $\sigma\in S_n$.

Determining the asymptotic distribution of $\LIS_n$ on $S_n$ under the uniform distribution has a 
rich and interesting history \cite{AD}. The efforts of many researchers around this problem 
culminated in the celebrated result of Baik, Deift and Johansson in 1999 \cite{BDJ} which completely
determined the asymptotic distribution of $\LIS_n.$ They proved that
 \[\lim_{n\to \infty}\bP\left(\frac{\LIS_n-2\sqrt n}{n^{1/6}}\leq t\right)=F(t) \quad \text{for all} \quad t\in \mathbb{R},\]
where  $F$ is the Tracy-Widom distribution function.
 This distribution was first obtained by Tracy and Widom~\cite{TW} in the context of random matrix theory as the distributional limit of the (centered and scaled) largest eigenvalue of the Gaussian unitary ensemble.

 The longest increasing subsequence problem in the context of pattern-avoiding permutations was first studied by Deutsch, Hildebrand and Wilf \cite{DHW} for $S_n(\tau)$ where $\tau \in S_3$. 
 They showed that in the case $\tau=231$, $\E^{231}_n(\LIS_n)\sim \frac{n}{2}$, 
 $\SD^{231}_n(\LIS_n)\sim\frac{\sqrt n}{2}$ and the normalized $\LIS_n$ converges to the standard 
 normal distribution. For $\tau=132$ and $\tau=321$, we have $\E^{132}_n(\LIS_n)\sim \sqrt{\pi n}$, 
 $\SD^{132}_n(\LIS_n)\sim\sqrt n$, and $\E^{321}_n(\LIS_n)\sim \frac{n}{2}$, 
 $ \SD^{321}_n(\LIS_n)\sim\sqrt n$;  for both of them the normalized $\LIS_n$ converges to 
 non-normal distributions which are evaluated exactly in \cite{DHW}. 
 Since $\LIS_n(\sigma)=\LDS_n(\sigma^r)=\LIS_n(\sigma^{rc})=\LIS_n(\sigma^{-1}),$ their results 
 give a complete picture for $\LIS_n$ and $\LDS_n$ on  $S_n(\tau)$ under the uniform measure for 
 every $\tau\in S_3$.
 We are not aware of any existing work on this problem for other classes of pattern-avoiding permutations.
 
 In section~\ref{sec.tau1tau2}, we determine the distributions of $\LIS_n$ and $\LDS_n$ on $S_n(T)$ for $T \subset S_3$ with $|T|=2$.   The results are summarized in Table \ref{summary}.  
The operations \textit{reverse, complement,} and \textit{inverse} induce obvious symmetry 
bijections among the classes  $S_n(T)$ for different pairs $T$, resulting in the five groupings
shown in  Figure~\ref{graph}.  Group (e) is empty for large $n$, so we shall present our results
in terms of groupings (a) thorugh (d).

\begin{table}[h!]
\begin{center}
\begin{tabular}{ ||c|c|c|c||c|c|c|| } 
 \hline
 &\multicolumn{3}{|c||}{$\LIS_n$}&\multicolumn{3}{|c||}{$\LDS_n$} \\
 \hline
 \hline
 &&&&&&\\
 $\{\taua,\taub\}$ & mean & standard&asymp.&mean&standard&asymp.\\ 
 &&deviation&normal?&&deviation&normal?\\
 \hline
 \hline
 &&&&&&\\
 (a) $\{132,321\}$ & $\sim\frac{5n}{6}$& $\sim\frac{5n}{6\sqrt 2}$&No &$\to 2$&$\to 0$&No \\
 &&&&&&\\
 (b) $\{132,231\}$ &  $\frac{n+1}{2}$ & $\frac{\sqrt {n-1}}{2}$&Yes&  $\frac{n+1}{2}$ & $\frac{\sqrt {n-1}}{2}$&Yes\\
 &&&&&&\\
 (c) $\{132,123\}$ &$\to 2$&$\to 0$ & No& $\frac{3n}{4}$& $\sim\frac{\sqrt n}{4}$&Yes\\
 &&&&&&\\
(d) $\{132,213\}$ & $\sim \log_2n$&$\to \text{constant}$&No&$\frac{n+1}{2}$ & $\frac{\sqrt {n-1}}{2}$&Yes\\
 &&&&&&\\
 \hline
\end{tabular}
\end{center}
\caption{
Summary of asymptotic behaviour of $\LIS_n$ and $\LDS_n$ on $S_n(\taua,\taub)$
for the groupings (a)--(d) of Figure \ref{graph}.  See Theorem~\ref{MainTh1} and Figure~\ref{lastfigure}.    
(We write ``$\rightarrow$'' to mean ``converges to,'' and ``$a_n\sim b_n$'' to mean 
``$a_n/b_n\rightarrow 1$''.  Otherwise, expressions are exact.)
}
\label{summary}
\end{table}

\begin{figure}
    
     \begin{subfigure}[b]{0.30\textwidth}
\centering
          \begin{tikzpicture}[node distance=4cm]
\node (A) at (0, 0) {$(312,123)$};
\node (B) at (3, 0) {(213,321)};
\node (C) at (3,3) {(231,123)};
\node (D) at (0,3) {(132,321)};


\draw [<->] (A) edge (B) (B) edge (C) (C) edge (D) (A) edge (D) (C) edge (A);
\draw [->](-0.2,3.3) .. controls (0,4)  .. (0.2,3.3);
\draw [->](2.8,-0.3) .. controls (3,-1)  .. (3.2,-0.3);
\put(40,2){{\footnotesize $r$}}
\put(40,90){{\footnotesize $r$}}
\put(40,50){{\footnotesize $i$}}
\put(90,50){{\footnotesize $c$}}
\put(-10,50){{\footnotesize $c$}}
\put(0,110){{\footnotesize $i$}}
\put(90,-20){{\footnotesize $i$}}
\end{tikzpicture}
\caption{}
          \end{subfigure}
          \begin{subfigure}[b]{0.30\textwidth}
\centering
          \begin{tikzpicture}[node distance=4cm]

\node (A) at (0, 0) {(132,312)};
\node (B) at (3, 0) {(231,213)};
\node (C) at (3,3) {(312,213)};
\node (D) at (0,3) {(132,231)};


\draw [<->] (A) edge (B) (B) edge (C) (C) edge (D) (A) edge (D) ;
\draw [->](-0.2,-0.3) .. controls (0,-1)  .. (0.2,-0.3);
\draw [->](2.8,-0.3) .. controls (3,-1)  .. (3.2,-0.3);
\draw [->](-0.2,3.3) .. controls (0,4)  .. (0.2,3.3);
\draw [->](2.8,3.3) .. controls (3,4)  .. (3.2,3.3);
\put(40,2){{\footnotesize $r$}}
\put(40,90){{\footnotesize $c$}}
\put(90,50){{\footnotesize $i$}}
\put(-10,50){{\footnotesize $i$}}
\put(5,110){{\footnotesize $r$}}
\put(90,-20){{\footnotesize $c$}}
\put(5,-20){{\footnotesize $c$}}
\put(90,110){{\footnotesize $r$}}
\end{tikzpicture}
\caption{}
          \end{subfigure}
          \begin{subfigure}[b]{0.30\textwidth}
\centering
        \begin{tikzpicture}[node distance=4cm]

\node (A) at (0, 0) {(312,321)};
\node (B) at (3, 0) {(213,123)};
\node (C) at (3,3) {(231,321)};
\node (D) at (0,3) {(132,123)};


\draw [<->] (A) edge (B) (B) edge (C) (C) edge (D) (A) edge (D) (C) edge (A);

\draw [->](-0.2,3.3) .. controls (0,4)  .. (0.2,3.3);
\draw [->](2.8,-0.3) .. controls (3,-1)  .. (3.2,-0.3);
\put(40,2){{\footnotesize $r$}}
\put(40,90){{\footnotesize $r$}}
\put(40,50){{\footnotesize $i$}}
\put(90,50){{\footnotesize $c$}}
\put(-10,50){{\footnotesize $c$}}
\put(0,110){{\footnotesize $i$}}
\put(90,-20){{\footnotesize $i$}}
\end{tikzpicture}
\caption{}
       \end{subfigure}
       
\bigskip

\begin{center}          
          \begin{subfigure}[b]{0.30\textwidth}
\centering
\begin{tikzpicture}[node distance=4cm]

\node (A) at (0,2.5) {(132,213)};
\node (B) at (3,2.5) {(231,312)};


\draw [<->]  (B) edge (A) ;
\draw [->](-0.2,2.8) .. controls (0,3.5)  .. (0.2,2.8);
\draw [->](2.8,2.8) .. controls (3,3.5)  .. (3.2,2.8);
\put(88,95){{\footnotesize $i$}}
\put(4,95){{\footnotesize $i$}}
\put(40,75){{\footnotesize $r, c$}}
\end{tikzpicture}
\caption{}
          \end{subfigure}
          \begin{subfigure}[b]{0.30\textwidth}
\centering
\begin{tikzpicture}[node distance=4cm]

\node (A) at (3,2.5) {(123,321)};

\draw [->](2.8,2.8) .. controls (3,3.5)  .. (3.2,2.8);
\put(90,95){{\footnotesize $r, c, i$}}
\end{tikzpicture}
\caption{}
          \end{subfigure}
\end{center}          
\caption{The bijections among the subclasses $S_n(\taua,\taub)$ for $\taua,\taub\in S_3$ 
under the operations $i$ = inverse, $c$ = complement, $r$ = reverse.} 
\label{graph}
\end{figure}
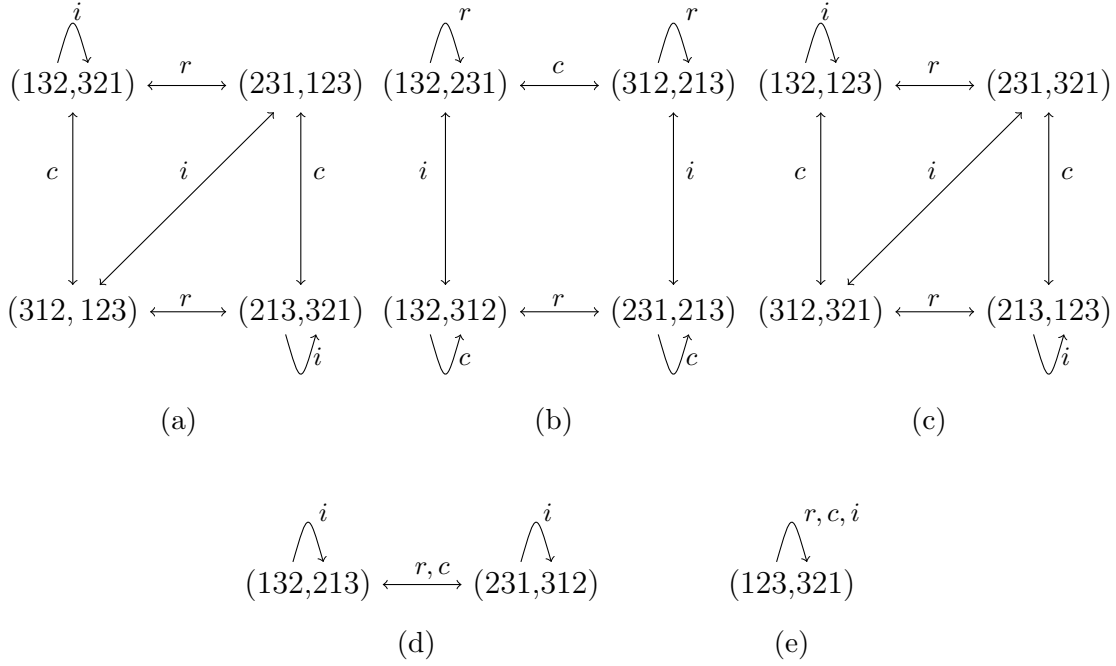

Our results are less precise for $\LIS_n$ and $\LDS_n$ on $S_n(\tau)$ with longer patterns $\tau$.
For $\tau\in S_k$ with $\tau_1=k$, we prove (see Corollary \ref{liscor}) that 
\begin{equation}
    \label{eq.lilis}
      \liminf_{n\rightarrow\infty}  \frac{\E_n^{\tau}(\LIS_n)}{n}   \;\geq \;  \frac{1}{L(\tau)} \,.
\end{equation}
In the special case $\tau=k(k-1)\cdots 21$, the Schensted correspondence
leads to exact asymptotics, and in particular to the result that $\LIS_n/n$ converges in
probability to $1/(k-1)$ (see Theorem \ref{monotonpatt}).
We conjecture that the behaviour of Equation (\ref{eq.lilis}) is generic, in the sense that 
at least one of $\E_n^{\tau}(\LIS_n)/n$ or $\E^{\tau}_n(\LDS_n)/n$ is bounded away from
0 for any pattern $\tau$.

\subsection{Longest alternating subsequences}
   \label{sec-intalt}
In 2006, an analogous theory for alternating subsequences in $S_n$ with the uniform probability measure was developed by Stanley~\cite{RS} and Widom~\cite{W}. For a given $\sigma \in S_n,$ we say that $\sigma_{i_1}\sigma_{i_2}\cdots\sigma_{i_k}$ is an \textit{alternating subsequence of length} $k$ in $\sigma$ if $i_1<i_2<\cdots<i_k$ and $\sigma_{i_1}>\sigma_{i_2}<\sigma_{i_3}>\sigma_{i_4}\cdots \sigma_{i_k}$. 
Let $\LAS_n(\sigma)$ be the length of the longest alternating subsequence in $\sigma$. Stanley proved that $\E(\LAS_n)=\frac{4n+1}{6}$ for $n\geq2$ and $\SD(\LAS_n)=\sqrt{\frac{8}{45}n-\frac{13}{180}}$ for $n\geq 4$. Furthermore, Widom proved that
$\LAS_n$ is asymptotically normal.

In \cite{FMW}, Firro, Mansur and Wilson studied the longest alternating subsequence problem for
 pattern-avoding permutations. They showed that for each $\tau\in S_3,$  
 $\LAS_n$ on $S_n(\tau)$ is asymptotically normal with mean $\E^{\tau}_n(\LAS_n)\sim n/2$ and 
 standard deviation $\SD^{\tau}_n(\LAS_n)\sim \sqrt n/2.$
For the exact values of the means and standard deviations, see \cite{FMW}.
We are not aware of any existing work on this problem for other classes of pattern-avoiding permutations.

We consider $\LAS_n$ on $S_n(T)$ for $T \subset S_3$ with $|T|=2$.  Some cases
such as $T=\{132,231\}$ force $\LAS_n\leq 3$ for every $n$.  Barring such cases,
we show that $\LAS_n$ is asymptotically normal with mean $n/2$ and standard deviation
$\sqrt{n}/2$ (Theorem \ref{lasthm}).  

For patterns $\tau$ of length 4 or more, our results for $\LAS_n$ on $S_n(\tau)$ are less
complete.  In some cases, including $4231$ and $2413$, we can show that 
the expected value of $\LAS_n$ is at least $cn$ for some positive constant $c$ (see
Theorem \ref{thm.alt4}).  
We conjecture that this is true for every pattern (except 12 and 21).

\subsection{Rare regions}
    \label{sec-intrare}
A permutation $\sigma \in S_n$ can be visualized via its plot,
which is the set $\{(i,\sigma_i):i\in [n]\}$ of $n$ points in the plane.
 Plots of randomly generated $\tau$-avoiding permutations
are shown in Figure~\ref{regionfig} for some patterns $\tau$ of length 4.  
Such plots suggest that for some patterns, there can be large regions 
of $[n]^2$ that rarely contain any points of randomly generated members of $S_n(\tau)$. Let's first introduce some definitions.

\begin{figure}[!h]
\begin{subfigure}{0.55\textwidth}
\includegraphics[width=6.4cm, height=6.4cm]{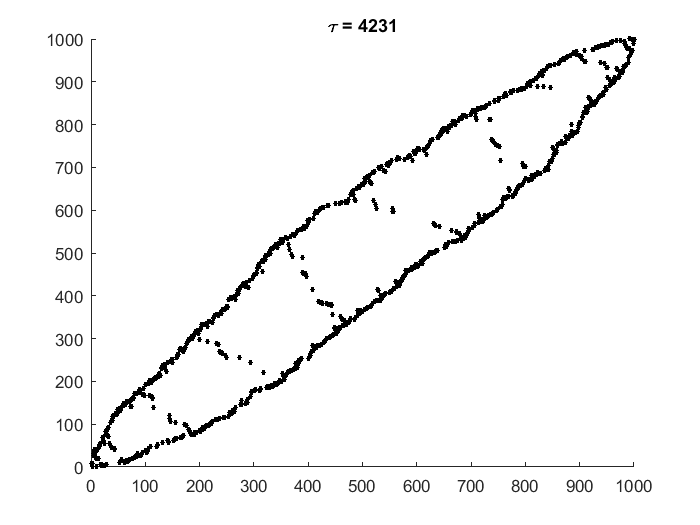}
\end{subfigure}
\begin{subfigure}{0.55\textwidth}
\includegraphics[width=6.4cm, height=6.4cm]{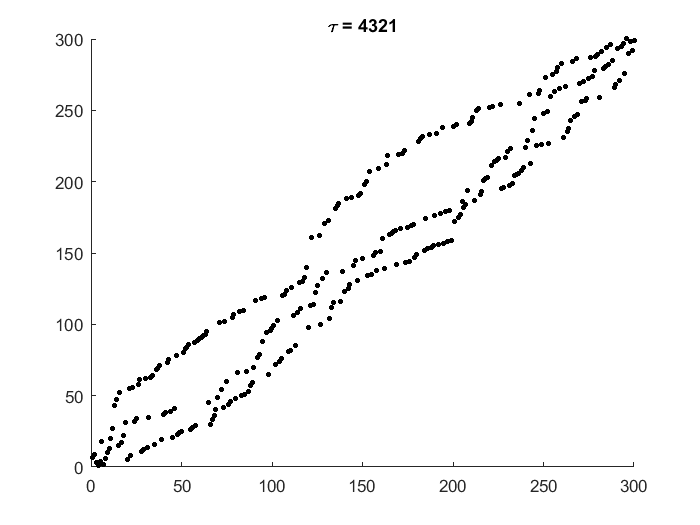} 
\end{subfigure}
\begin{subfigure}{0.55\textwidth}
\includegraphics[width=6.4cm, height=6.4cm]{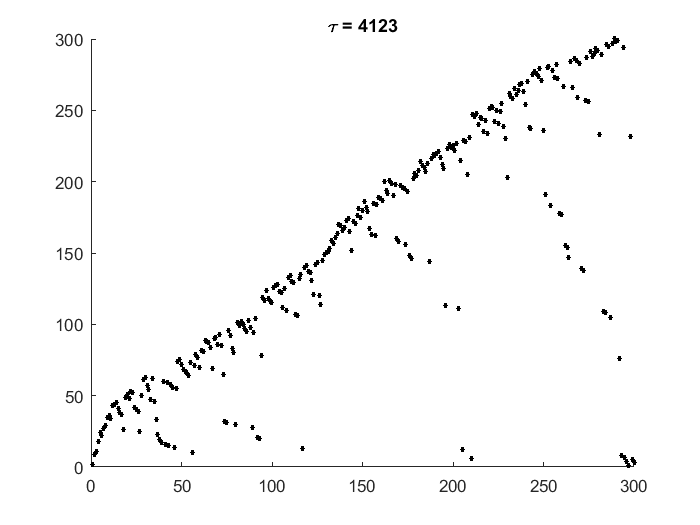}
\end{subfigure}
\begin{subfigure}{0.55\textwidth}
\includegraphics[width=6.4cm, height=6.4cm]{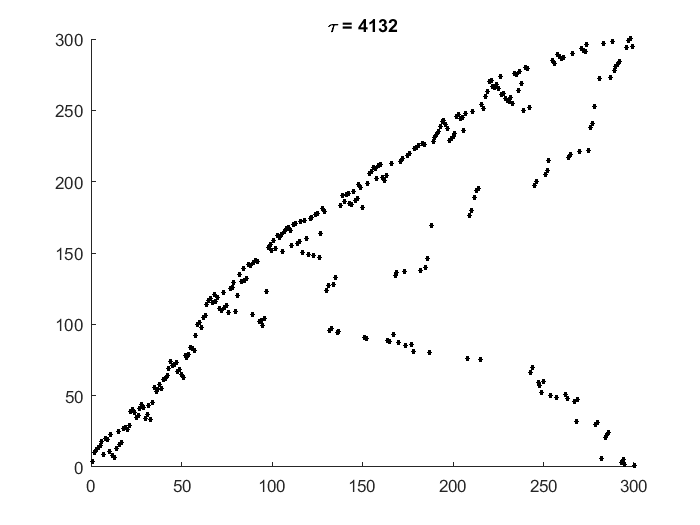} 
\end{subfigure}
\begin{subfigure}{0.55\textwidth}
\includegraphics[width=6.4cm, height=6.4cm]{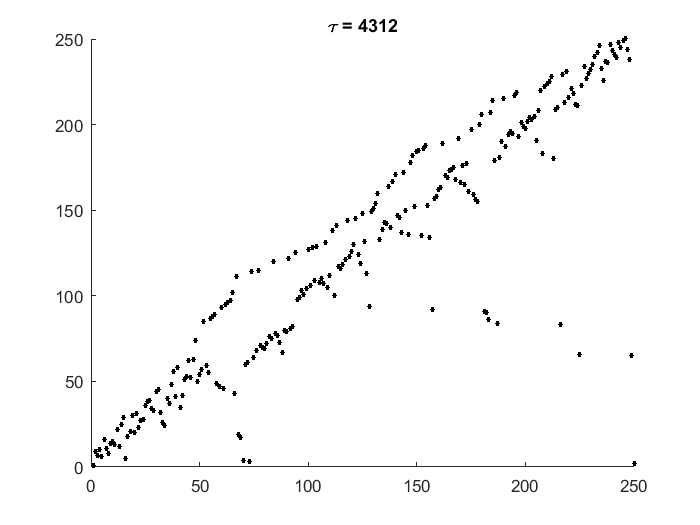}
\end{subfigure}
\begin{subfigure}{0.55\textwidth}
\includegraphics[width=6.4cm, height=6.4cm]{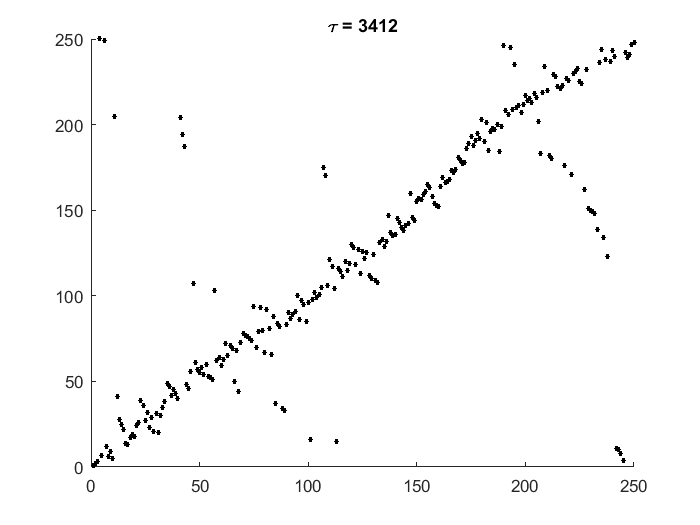} 
\end{subfigure}
\caption{Examples of randomly generated permutations in $S_{n}(\tau)$ for 
some patterns $\tau\in S_4$. The top left figure was generated by Yosef Bisk
under the supervision of N. Madras, using a modification
of the Monte Carlo algorithm of \cite{ML}.}
\label{regionfig}
\end{figure}

To deal with these concepts, we scale the plot of $\sigma$ down to the unit square,
and consider what parts of the unit square are likely to remain empty.
We shall say that a point $(x,y)$ of $[0,1]^2$ is  $\tau$-rare if 
\begin{align*}
\text{ for every sequence } & \{(I_n,J_n)\}_{n\geq1}\text{ such that }(I_n,J_n)\in[n]^2\text{ and }   
\lim_{n\to\infty}\left(\frac{I_n}{n},\frac{J_n}{n}\right)=(x,y), \\
&\text{ we have }
\limsup_{n\to\infty}\bP_n^{\tau}(\sigma_{I_n}=J_n)^{1/n} \,<\, 1 \, ;
\end{align*}
that is, 
every sequence $(I_n,J_n)$ of grid points that scales down to $(x,y)$ has
exponentially decaying probabilities of being in the plot of $S_n(\tau)$.  
Then the ``rare region'' $\mathcal{R}\equiv\mathcal{R}(\tau)$ is defined to be the
set of all $\tau$-rare points in the unit square.   Let 
$\mathcal{R}^{\uparrow}=\mathcal{R}\cap \{(x,y)\in [0,1]^2:y>x\}$ be the part of the 
rare region above the diagonal $y=x$.
Using this terminology, we have the following basic result from \cite{AtM}
(see Theorems 1.3 and 8.1 and Proposition 3.1).

\begin{theorem}[{\cite{AtM}}]
    \label{thm.atm}
Assume $\tau\in S_k$ and $\tau_1>\tau_k$.
\\
(a)  Assume $\tau_1=k$.  Then there is a $\delta>0$ such that 
$[0,\delta]\times[1-\delta,1]\subset \mathcal{R}$; that is, all points
sufficiently close to the point $(0,1)$ are $\tau$-rare.
\\
(b) Assume $\tau_1<k$.  Then $\mathcal{R}^{\uparrow}=\emptyset$; that is,
there are no $\tau$-rare points above the diagonal.
\end{theorem}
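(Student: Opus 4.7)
The idea is to reduce to a shorter forbidden pattern. Set $\tau' := \tau_2 \tau_3 \cdots \tau_k$, which lies in $S_{k-1}$ since $\tau_1 = k$. If $\sigma \in S_n(\tau)$ satisfies $\sigma_i = j$, then the subsequence of $\sigma$ at positions $\ell > i$ with $\sigma_\ell < j$, reduced to a pattern, must avoid $\tau'$; otherwise prepending the pivot $(i, j)$, which is leftmost and maximal among these, would realise a copy of $\tau$. I would exploit this by stratifying according to $a := |\{\ell < i : \sigma_\ell < j\}|$ and combining the $\tau'$-constraint with the global $\tau$-avoidance of $\sigma|_{\{i+1,\ldots,n\}}$. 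Schematically,
\[
|\{\sigma \in S_n(\tau) : \sigma_i = j\}| \;\leq\; \sum_{a} \binom{j-1}{a}\binom{n-j}{i-1-a}(i-1)! \cdot R_n(a, i, j),
\]
where $R_n(a, i, j)$ counts right-block arrangements satisfying both avoidance conditions and is bounded submultiplicatively by products of Stanley--Wilf growth rates.

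Taking $i \leq \delta n$ and $j \geq (1-\delta)n$ and dividing by $|S_n(\tau)| = L(\tau)^{n(1+o(1))}$, the ratio should collapse (up to subexponential factors) to $(L(\tau')/L(\tau))^{(1-2\delta)n}$, which decays exponentially provided that $L(\tau') < L(\tau)$. Establishing this strict growth comparison is the main obstacle for (a); I would do it by producing, for some small fixed $m$, a $\tau$-avoider of length $m$ not of the form ``prepend $m$ to a $\tau'$-avoider of length $m-1$,'' then inflating to exhibit exponentially many additional $\tau$-avoiders beyond the image of the prepend-maximum injection $S_{m-1}(\tau') \hookrightarrow S_m(\tau)$.

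\textbf{Approach for (b).} Now the goal is to lower-bound $|\{\sigma \in S_n(\tau) : \sigma_{I_n} = J_n\}|$ by $L(\tau)^{n(1-o(1))}$ along some sequence $(I_n, J_n) \to (x, y)$. Because $\tau_1 < k$ and $\tau_1 > \tau_k \geq 1$, the value $k$ lies at some interior position $\ell^* \in \{2, \ldots, k-1\}$ of $\tau$. My starting construction is the block form $\sigma = \alpha \cdot J_n \cdot \beta$, with $\alpha$ a $\tau$-avoider of $\{1, \ldots, I_n - 1\}$ in the leftmost $I_n - 1$ positions and $\beta$ a $\tau$-avoider of $\{I_n, \ldots, n\} \setminus \{J_n\}$ in the remaining positions. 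The hypothesis $\tau_1 > \tau_k$ prevents $\tau$ from being a non-trivial direct sum, forcing any $\tau$-copy in $\sigma$ to pass through the pivot $(I_n, J_n)$; a case analysis on the role $J_n$ plays in such a copy reduces the remaining work to imposing avoidance of the reduced prefix $\tau_1 \cdots \tau_{\ell^*-1}$ on $\alpha$ and of the reduced suffix $\tau_{\ell^*+1} \cdots \tau_k$ on the sub-permutation of $\beta$ at values below $J_n$.

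The main obstacle for (b) is handling arbitrary $(x, y)$ above the diagonal. The block construction above gives the full growth rate $L(\tau)^{(1-o(1))n}$ easily only when $(I_n/n, J_n/n) \to (0, y)$, because the prefix-avoidance on $\alpha$ contributes a factor $(L(\text{prefix})/L(\tau))^{xn}$ which is strictly less than one whenever the prefix growth rate is strictly smaller than $L(\tau)$. For general $(x, y)$ with $y > x$, one would extend the argument either by a finer multi-block decomposition (inflating a small $\tau$-avoider whose own maximum is interior) or by moving the pivot along a carefully chosen curve and invoking a continuity argument. The sub-case in which $J_n$ would play a non-maximal role in a hypothetical $\tau$-copy through the pivot is also delicate, and exploits the conditions $y < 1$ and $y > x$ to ensure that $\beta$ contains linearly many values both above and below $J_n$. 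Verifying that such a construction works for every $(x, y)$ above the diagonal is the step I expect to require the most combinatorial care.
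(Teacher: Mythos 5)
This theorem is quoted from \cite{AtM} rather than proved in the present paper, so the relevant comparison is with the machinery the paper does develop (Theorems \ref{GMainTh2} and \ref{regionthm}) and with the arguments of \cite{AtM} that they reproduce. Measured against either, your proposal has genuine gaps in both parts.

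For (a), the reduction is sound as far as it goes: if $\sigma_i=j$ with $i\leq\delta n$ and $j\geq(1-\delta)n$, the block of at least $(1-2\delta)n$ points with position $>i$ and value $<j$ must avoid $\tau'=\tau_2\cdots\tau_k$ (reduced). But two things break. First, the factor $(i-1)!$ in your displayed bound is super-exponential — its $n$-th root grows like $n^{\delta}$ — so the bound as written exceeds $L(\tau)^n$ by an unbounded factor; you must use that the left block itself avoids $\tau$ and replace $(i-1)!$ by $|S_{i-1}(\tau)|\leq L(\tau)^{\delta n}$. Second, and more seriously, the whole argument rests on the strict inequality $L(\tau')<L(\tau)$, which you flag but do not establish. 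Your proposed fix (exhibit one $\tau$-avoider outside the image of the prepend-maximum injection, then inflate) yields at best a subexponential surplus; proper containment of classes does not in general force strictly smaller growth rates. Worse, since $|S_n(\tau;\sigma_1=n)|=|S_{n-1}(\tau')|$ exactly when $\tau_1=k$, the inequality $L(\tau')<L(\tau)$ is essentially \emph{equivalent} to the rarity of the corner $(0,1)$ — i.e., to the statement being proved — so assuming it is close to circular. The route actually taken (Theorem \ref{GMainTh2}: left-to-right maxima of a typical $\tau$-avoider are dense in every value window, proved via the insertion map $\mathcal{I}$ and a preimage count) sidesteps this entirely, and part (a) then follows from Theorem \ref{regionthm}(b) with any $\delta<1/(1+L(\tau))$.

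For (b), you concede that your block construction attains the full rate $L(\tau)^{n(1-o(1))}$ only when $x=0$ and leave general $(x,y)$ with $y>x$ to an unspecified ``finer decomposition or continuity argument''; that is the entire content of the claim, so this is a gap rather than a detail. The two missing ideas are both short. First, when $\tau_1<k$ the corner $(0,1)$ is good for trivial reasons: if $\sigma_1=n$ then $\sigma_1$ cannot participate in any occurrence of $\tau$ (it would have to play the role of $\tau_1$ and be the maximum of the occurrence, forcing $\tau_1=k$), so $|S_n(\tau;\sigma_1=n)|=|S_{n-1}(\tau)|$ and the $n$-th root tends to $L(\tau)$. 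Second, the convex-hull property of Theorem \ref{regionthm}(a) — whose proof uses only the direct-sum injection and hence only the sum-indecomposability of $\tau$, guaranteed by $\tau_1>\tau_k$ — then places the whole closed triangle $\{(x,y): 0 \leq x\leq y \leq 1\}=\mathrm{conv}\{(0,0),(0,1),(1,1)\}$ inside $\mathcal{G}$, so $\mathcal{R}^{\uparrow}=\emptyset$. This replaces your delicate case analysis of $\tau$-occurrences through the pivot with a two-line argument.
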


Without loss of generality, we shall assume for the rest of this paragraph
that $\tau \in S_k$ and $\tau_1>\tau_k$. We define the ``good region'' $\mathcal{G}$ to be the complement of  the rare region:
$\mathcal{G}=[0,1]^2\setminus \mathcal{R}$.   We prove  that the 
region $\mathcal{G}$ contains the diagonal $y=x$ and is a closed set (Theorems \ref{regionthm}
and \ref{thm.boundary}).  When $\tau_1=k$,  we prove that the boundary between 
$\mathcal{R}^{\uparrow}$ and $\mathcal{G}$ is a curve $y=r^{\uparrow}(x)$ that is Lipschitz 
continuous and strictly increasing on $[0,1]$.  Moreover, the left and right derivatives of 
$r^{\uparrow}$ are in the 
interval $[L(\tau)^{-1},L(\tau)]$ at every point (see Theorem~\ref{regionthm}).

\subsection{Notation and terminology}
\label{sec-not}
This section contains basic notation and terminology used in this paper, much of
which is standard in the permutation pattern literature.

For $\tau\in S_k$ and $\sigma\in S_n$, we say that $\sigma$ \textit{contains the pattern} $\tau$ if there is a subsequence $\sigma_{i_1}\sigma_{i_2}\cdots\sigma_{i_k}$ of $k$ elements of $\sigma$ that appears in the same relative order as the pattern $\tau$. For example, the permutation $\sigma=6431257$ contains the patterns $321$ and $3124$ (since $\sigma$ contains the subsequences $\sigma_1\sigma_2\sigma_3=643$ or $\sigma_1\sigma_3\sigma_5=632,$ and $\sigma_2\sigma_4\sigma_5\sigma_7=4127$). We say that $\sigma$ \textit{avoids the pattern} $\tau$ if it does not contain $\tau$. The set of all permutations in $S_n$ avoiding $\tau$ is denoted by $S_n(\tau).$ For any set $T$ of patterns, we write $S_n(T)$ to denote the set of permutations in $S_n$ avoiding all the patterns in $T$, that is, $S_n(T)=\cap_{\tau \in T}S_n(\tau)$. For example, the permutation $521346$ is in $S_6(132,2314)$ because it avoids both $132$ and $2314$. 

The \textit{direct sum} of two permutations $\sigma\in S_n$ and $\phi\in S_m$ is the 
permutation $\sigma\oplus \phi$ in $S_{n+m}$ obtained by concatening $\phi$ to the 
northeast corner of $\sigma$,
\[    \sigma\oplus \phi   \;=\;   \sigma_1\cdots\sigma_n(n+\phi_1)\cdots(n+\phi_m) \,,  \]
while the \textit{skew sum} concatenates $\phi$ to the southeast corner of $\sigma$,
\[    \sigma\ominus \phi   \;=\;   (m+\sigma_1)\cdots(m+\sigma_n) \phi_1\cdots\phi_m \,.  \]
A permutation is \textit{layered} if it is the direct sum of one or more decreasing 
permutations.

For a given permutation $\sigma$, we say that $\sigma_i$ is  a \textit{left-to-right maximum} if 
$\sigma_i>\sigma_j$ for all $j<i.$ In this situation, $i$ is referred as the \textit{location} of the 
left-to-right maximum.  (For clarity, we sometimes refer to $\sigma_i$ as the \textit{height}.)
Similarly, we say that $\sigma_i$ is a \textit{right-to-left maximum} if 
$\sigma_i>\sigma_j$ for all $j>i.$ 

For natural numbers $n$, $i$, and $j$, we define $S_n(\tau; \sigma_i=j)$ to be the 
set of permutations $\sigma$ in $S_n(\tau)$ such that $\sigma_i=j$.  
More generally, for any statement $\mathcal{P}$ about a generic permutation $\sigma$,
we let $S_n(\tau; \mathcal{P})$ be the set of permutations in $S_n(\tau)$ for 
which $\mathcal{P}$ is true.    

The number of elements in a set $A$ is denoted by $|A|.$ 

For two sequences $\{a_n\}_{n\geq1}$ and $\{b_n\}_{n\geq1}$, we write $a_n\sim b_n$ if 
$\lim_{n\to\infty}\frac{a_n}{b_n}=1.$

\medskip

\section{Results and proofs}
In this section, we will state and prove our        results.

\subsection{Longest monotone subsequences of permutations in $S_n(\taua,\taub)$ for \\
$\taua, \taub\in S_3$}
    \label{sec.tau1tau2}

In Table~\ref{summary}, we summarized our results on the mean and 
standard deviation of $\LIS_n$ and 
$\LDS_n$ on $S_n(\taua,\taub)$ for $\taua,\taub\in S_3$, and whether they are asymptotically 
normal or not.

Theorem~\ref{MainTh1} below gives fuller descriptions of the distributions of the longest 
monotone subsequences in these cases.  To prepare for the statement of  the theorem, we first
introduce the relevant distributions.

We use $\Bin(n,p)$ to denote a binomial random variable with parameters $n$ and $p$.

For $n\geq 3,$ let $\mathcal{D}[n]$ denote the triangular set of lattice points above the diagonal 
in the square $[n]\times [n]$, together with the origin; 
that is, $\mathcal{D}[n]=\{(k,m)\in \mathbb{Z}^2: 1\leq k<m\leq n\}\cup\{(0,0)\}.$ 
Then $|\mathcal{D}[n]|={n \choose 2}+1.$ 
We put uniform probability measure on $\mathcal{D}[n]$ and consider the following random variable:
\[\D_n((k,m))\;=\; n-\min(m-k,k) \;=\; \max(n-m+k,n-k).
\]
Then we have
\[\bP(\D_n=n-j)\;=\;\frac{2n-4j+1}{{n \choose 2}+1} \hspace{3mm}\text{ for } 1\leq j\leq \frac{n}{2}\,,
\hspace{5mm}\hbox{and}\hspace{5mm}
\bP(\D_n=n)\;=\;\frac{1}{{n \choose 2}+1}\,.
\]
Here is a way to think about $\D_n$, given that $(k,m)\neq (0,0)$.  Choose two points uniformly
without replacement from $[n]$; let $k$ be the smaller number and let $m$ be the larger.
Then $\D_n$ is the number of points remaining in $[n]$ after discarding the smaller of the 
intervals $(0,k]$ and $(k,m]$.

Let $X_1,X_2,\cdots$ be an independent sequence of Bernoulli$(\frac{1}{2})$ random variables:
\begin{equation}
   \label{eq.bern}
    \bP(X_i=1)\;=\;\bP(X_i=0)\;=\;\frac{1}{2} \hspace{5mm}\text{ for all }i\geq 1.
\end{equation}

Then define the following random variables for $n\geq 3$:
\begin{align}
\T_n&=n-1-\sum_{i=2}^{n-2}X_i+\sum_{i=2}^{n-1}X_iX_{i-1}\\
\R_{n}&=\text{the length of the longest run of zeros in }X_1,X_2,\cdots,X_{n}.
\end{align}
 
 For two random variables $X$ and $Y$, we write $X\myeq Y$ to say that $X$ and
 $Y$ have the same distribution.

\begin{theorem} 
\label{MainTh1}
Let T denote a 2-element subset of $S_3.$ Consider $S_n(T)  $ with the uniform probability measure. Then we have the following:
\begin{itemize}
\item[(a)] For $T=\{132, 321\},$ $\LIS_n\myeq \D_n$ and $\LDS_n\leq2$.
\item[(b)] For $T=\{132, 231\},$ $\LIS_n\myeq \LDS_n\myeq \Bin(n-1,1/2)+1.$
\item[(c)] For $T=\{132,123\},$ $\LDS_n\myeq \T_{n}$ and $\LIS_n\leq 2$.
\item[(d)] For $T=\{132, 213\},$ $\LDS_n\myeq \Bin(n-1,1/2)+1$ and $\LIS_n\myeq \R_{n-1}+1$.
\end{itemize}
\end{theorem}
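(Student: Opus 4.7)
The plan is to treat the four cases separately. In each, the crux is a structural characterization of $S_n(T)$ from which the distribution of the relevant longest monotone subsequence falls out almost mechanically.

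For (a), I would first show that avoiding both $132$ and $321$ forces at most one descent: if there were descents at positions $i<j$, then among the four values $\sigma_i,\sigma_{i+1},\sigma_j,\sigma_{j+1}$ one can always extract either a $321$ or a $132$. A short additional argument then forces every non-identity $\sigma\in S_n(132,321)$ into the form $(k+1)(k+2)\cdots m\,1\,2\cdots k\,(m+1)\cdots n$ for a unique pair $(k,m)$ with $1\le k<m\le n$, giving a bijection with $\mathcal{D}[n]$ (with the identity corresponding to $(0,0)$). Reading off the three increasing runs yields $\LIS=\max(n-k,\,n-m+k)=n-\min(k,m-k)=\D_n(k,m)$, while $\LDS\le 2$ is automatic from $321$-avoidance. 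For (b), I would argue recursively that the current largest element must be placed leftmost or rightmost (otherwise it plays the role of the peak in a $132$ or a $231$), setting up a bijection with $\{L,R\}^{n-1}$. The resulting $\sigma$ consists of a decreasing prefix (the $L$-letters, largest first), then $1$, then an increasing suffix (the $R$-letters), so $\LIS$ equals the number of $R$'s plus one and $\LDS$ equals the number of $L$'s plus one, each following $\Bin(n-1,1/2)+1$.

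For (c), the structural claim is that $\sigma\in S_n(132,123)$ iff, when $n$ sits at position $p$, the preceding $p-1$ entries are $n-1,n-2,\ldots,n-p+1$ in decreasing order and the suffix is an arbitrary element of $S_{n-p}(132,123)$ on $\{1,\ldots,n-p\}$. This gives $|S_n|=2^{n-1}$ and a recursive coin-flip encoding $X_1,\ldots,X_{n-1}$ in which one reads from the left of the available coin window, stopping at the first $1$, to determine the position of the maximum in each successive sub-permutation. The structure produces the recursion $\LDS=\max(1,p-1)+\LDS(\mathrm{tail})$, which unfolds to $\LDS=n-\beta$, where $\beta$ counts the recursion levels with $p\ge 2$. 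Identifying these levels with the $1\to 0$ transitions in the extended word $1\,X_1\cdots X_{n-1}\,1$ and then expanding the corresponding indicator sum via a telescoping argument gives $\beta=1+\sum_{i=2}^{n-2}X_i-\sum_{i=2}^{n-1}X_{i-1}X_i$, so $\LDS=\T_n$; the bound $\LIS\le 2$ follows from $123$-avoidance. For (d), the analogous structural claim is that $\sigma\in S_n(132,213)$ iff, when $1$ sits at position $p$, the preceding block is an arbitrary element of $S_{p-1}(132,213)$ on the top $p-1$ values and the suffix is $2,3,\ldots,n-p+1$ in increasing order; here one encodes using coins $X_1,\ldots,X_{n-1}$ where the position of the \emph{last} $1$ in the relevant prefix determines $p$. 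The recursion $\LDS=1+\LDS(\text{before})$ peels off one unit per level, so $\LDS-1$ equals the total number of $1$'s in the coin string and is therefore $\Bin(n-1,1/2)$-distributed. For $\LIS$, no increasing subsequence can cross from a ``before'' block into an ``after'' block (all before-values exceed all after-values), so $\LIS$ equals the maximum over recursion levels of (size of current sub-perm) $-$ (size of next sub-perm); in terms of the positions $i_1<\cdots<i_\ell$ of $1$'s in $X_1\cdots X_{n-1}$, these level-wise differences are $i_1,\,i_2-i_1,\,\ldots,\,n-i_\ell$, and since each differs from the corresponding 0-run by exactly one, the maximum equals $\R_{n-1}+1$.

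The main technical obstacle is case (c): translating the number of $1\to 0$ transitions in the extended coin word into the specific quadratic expression for $\T_n$ requires the careful expansion and telescoping described above, and one must also keep track of the edge contributions from the two virtual $1$'s. Once the structural characterizations and encodings are in place, every other case reduces to routine bookkeeping.
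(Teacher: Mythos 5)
Your proposal is correct and follows essentially the same route as the paper: the same Simion--Schmidt structural characterizations of each class (three increasing runs for $\{132,321\}$; decreasing prefix, $1$, increasing suffix for $\{132,231\}$; skew sums of $\delta\oplus 1$ blocks, resp.\ of increasing runs, for $\{132,123\}$ and $\{132,213\}$), the same encodings by binary strings marking block ends, and the same bookkeeping (your identity $\beta=1+\sum_{i=2}^{n-2}X_i-\sum_{i=2}^{n-1}X_{i-1}X_i$ for the number of $1\to 0$ transitions in $1X_1\cdots X_{n-1}1$ checks out and recovers $\T_n$ exactly). The only cosmetic differences are that you derive some structures recursively by placing the extremal element (e.g.\ peeling the last run in case (d) rather than the first), which changes nothing of substance.
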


\begin{figure}[h!]
\begin{subfigure}{0.51\textwidth}
\includegraphics[width=8.5cm, height=8.6cm]{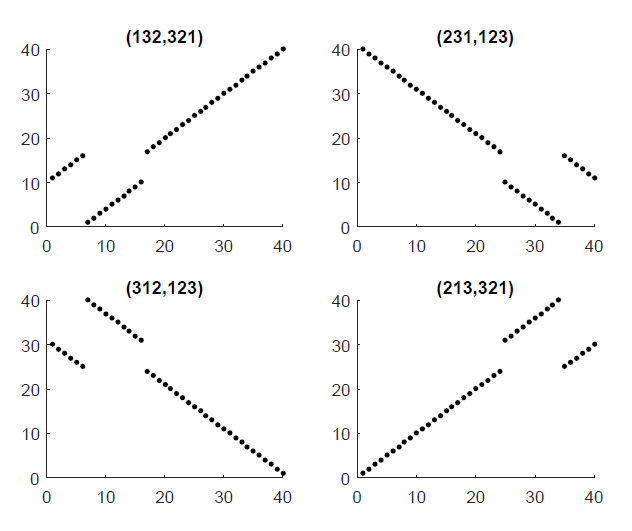}
\caption{}
\label{a}
\end{subfigure}
\begin{subfigure}{0.50\textwidth}
\includegraphics[width=8.5cm, height=8.6cm]{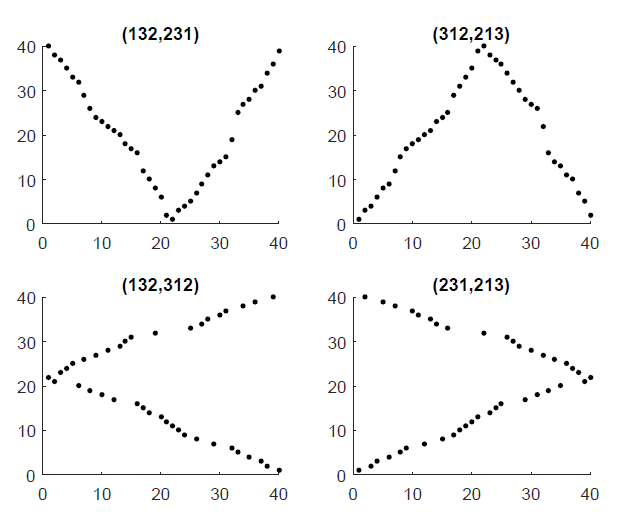} 
\caption{}
\label{b}
\end{subfigure}
\begin{subfigure}{0.50\textwidth}
\includegraphics[width=8.5cm, height=8.6cm]{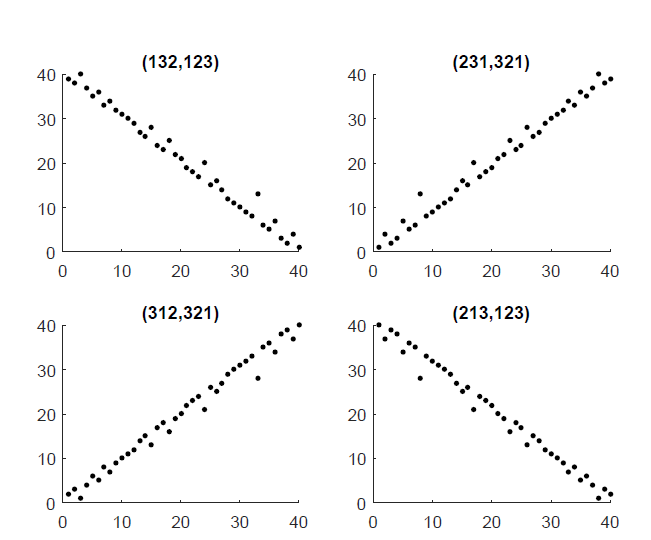} 
\caption{}
\label{c}
\end{subfigure}
\begin{subfigure}{0.50\textwidth}
\includegraphics[width=8.7cm, height=8.8cm]{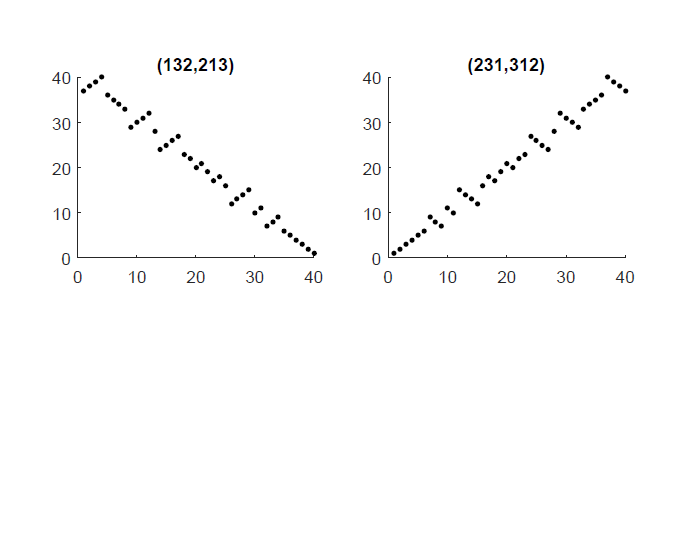}
\caption{}
\label{d} 
\end{subfigure}
\caption{Examples of permutations from $S_n(\taua,\taub)$ for $\taua,\taub\in S_3$ and $n=40.$ The permutations from $S_{40}(132,321), S_{40}(132,231), S_{40}(132,123)$ and $S_{40}(132,213)$ were generated randomly, and the rest were obtained using bijections of Figure~\ref{graph}.}
\label{lastfigure}
\end{figure}

The proof of Theorem \ref{MainTh1} exploits the nice structure of each  $S_n(T)$.  
First, we note that Simion and Schmidt \cite{SS} showed 
 \[|S_n(132,321)|={n \choose 2}+1\quad \text{and} \quad |S_n(132,231)|=|S_n(132,123)|=|S_n(132,213)|=2^{n-1}.\]
The proofs of each part of Theorem \ref{MainTh1} includes an explicit bijection for $S_n(\tau)$: 
with $\mathcal{D}[n]$ for part (a), and with
binary strings of length $n-1$ for the other parts.  We shall use these bijections again in the proof of 
Theorem \ref{lasthm} in Section \ref{LASsection} about longest alternating subsequences.

\begin{proof}[Proof of Theorem \ref{MainTh1}]
\textit{(a) The case $T=\{132,321\}$:} 
We will follow Propositions 11 and 13 in \cite{SS} which shows that there exists a bijection between $S_n(132,321)\setminus\{12\cdots n\}$ and the 2-element subsets of $[n].$ 
Let $\sigma \in S_n(132,321).$ 

If $\sigma$ is not the identity permutation, let $m$ be the largest
$i$ such that $\sigma_i\neq i$, and let $k$ be the index such that $\sigma_k=m$.
Then we must have $\sigma_1\sigma_2\cdots\sigma_{k-1}=(m-k+1)(m-k+2)\cdots(m-1)$ and 
$\sigma_{k+1}\sigma_{k+2}\cdots\sigma_{m}=12\cdots(m-k)$. 
That is, $\sigma$ must have the form
\[\sigma=(m-k+1)(m-k+2)\cdots m 1 2 \cdots (m-k) (m+1)(m+2)\cdots n\]
for some $1\leq k\leq m\leq n,$ and $m=k$ iff $\sigma=12\cdots n.$ 
See  Figure~\ref{fig132231}(a) for an example.

For $1\leq k< m\leq n,$ the pair $\{k,m\}$ determines $\sigma$ uniquely.
It follows from this argument that with uniform probability measure on $S_n(132,321)$, $\LIS_n\myeq \D_n.$

\begin{remark}
We note that $\sigma\in S_n(132,321)$ can be represented as the \textit{inflation} of $213$ by 
the increasing permutations $\taua, \taub, \tauc$ where $\taua=12\cdots k$, 
$\taub=12\cdots (m-k)$, $\tauc=12\cdots (n-m)$; that is, $\sigma=213[\taua,\taub,\tauc]$. 
The permutations in $S_n(132,321)$ are also examples of $3\times3$ \textit{monotone grid classes}
 (see section 4 of \cite{Vatter}).
\end{remark}

\begin{figure}[t!]
\begin{subfigure}{0.55\textwidth}
\includegraphics[width=7cm, height=7cm]{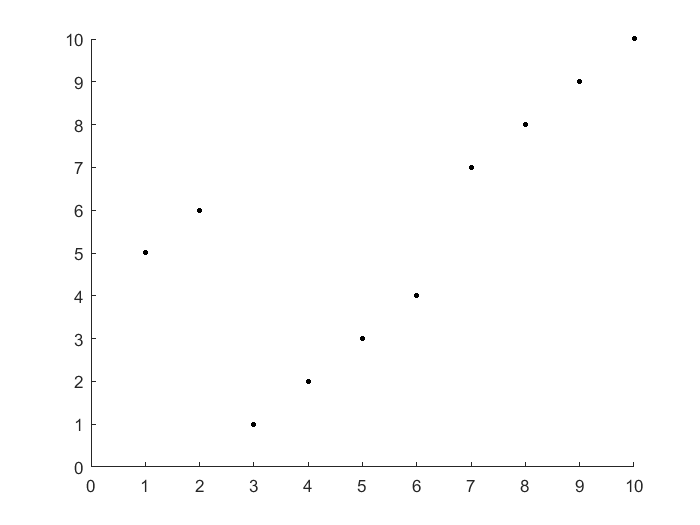} 
\label{a}
\caption{}
\end{subfigure}
\begin{subfigure}{0.55\textwidth}
\includegraphics[width=7cm, height=7cm]{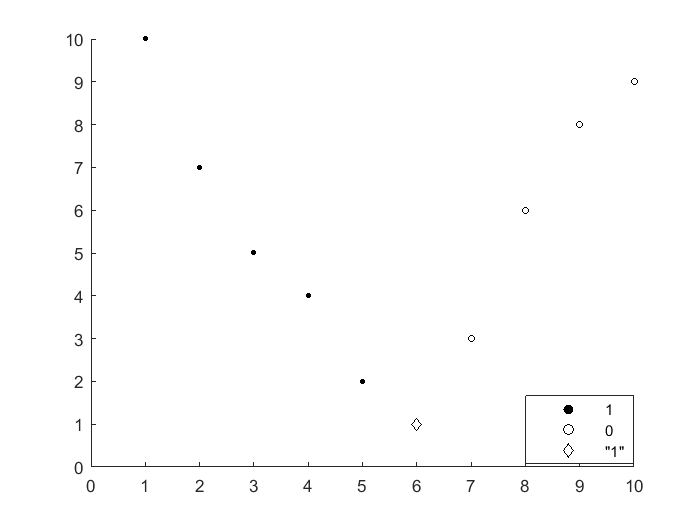} 
\label{b}
\caption{}
\end{subfigure}
\caption{The left figure is the plot of the permutation
$\sigma= 5\,     6\,  1\,    2\,    3\, 4\,  7\,    8\,    9\,    10\,$ in $S_{10}(132,321)$ where $k=2$ and $m=6$. 
The right figure is the plot of the permutation $\sigma=10\,    7\,    5\,    4\,    2\,    1\,     3\,     6\, 8\,     9$ in $S_{10}(132,231)$ which corresponds to the $0$-$1$ sequence of length $9,$ $101101001$.}
\label{fig132231}
\end{figure}

\medskip
\noindent
\textit{(b) The case $T=\{132,231\}$:}
Let $\sigma\in S_n(132,231).$ Let $k$ be the index such that  $\sigma_k=1$. 
To avoid the pattern $132$, we must have $\sigma_{k+1}<\sigma_{k+2}<\cdots<\sigma_n;$ 
and to avoid the pattern $231,$ we must have $\sigma_1>\sigma_2>\cdots>\sigma_{k-1}.$ 
In the terminology of 
\cite{Bevan}, $S_n(132,231)$ is a \textit{skinny monotone grid class}, that is, it is the \textit{juxtaposition} of a decreasing and an increasing sequence (see chapter 3 of \cite{Bevan}).

For our permutation  $\sigma$ 
we obtain a $0$-$1$ sequence of length $n-1$ by labeling the decreasing points in the plot of 
$\sigma$ with $1$ and the increasing points with $0$, and reading them from bottom to top, 
excluding the point $\sigma_k=1$.
That is, for $j\in [n{-}1]$, let $i$ be the index such that $\sigma_i=j+1$, and 
then set $x_j$ to be 1 (respectively, 0) if  $i<k$ (respectively, $i>k$).  
See Figure \ref{fig132231}(b) for an example.
It is not hard to see
that this gives a bijection between $S_n(132,231)$ and $\{0,1\}^{n-1}$.  Thus the uniform distribution
ensures that every sequence has probability $2^{-(n-1)}$ and hence that 
$x_1,\ldots,x_{n-1}$ are independent Bernoulli$(\frac{1}{2})$ random variables.

Then we have 
\[\LDS_n(\sigma)\;=\; k+1 \;=\; \sum_{i=1}^{n-1}1_{x_i=1}+1
\hspace{4mm}\text{ and } \hspace{4mm}
\LIS_n(\sigma)\;=\; l+1 \;=\;\sum_{i=1}^{n-1}1_{x_i=0}+1\,,\]
and hence  $\LDS_n\myeq \LIS_n\myeq \Bin(n-1,1/2)+1.$

\begin{figure}[t!]
\begin{subfigure}{0.55\textwidth}
\includegraphics[width=7cm, height=7cm]{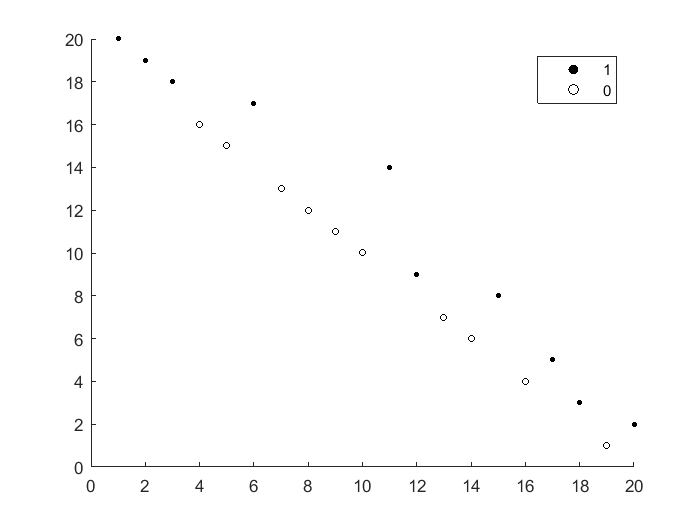}

\caption*{(c)}
\end{subfigure}
\begin{subfigure}{0.55\textwidth}
\includegraphics[width=7cm,height=7cm]{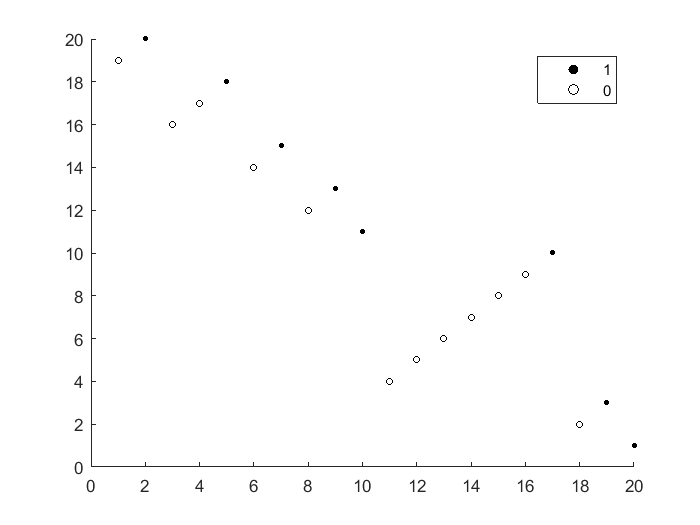}

\caption*{(d)} 
\end{subfigure}
\caption{The left figure is the plot of the permutation $\sigma=20\,19\,    18\,    16\,     15\,     17\,    13\,     12\,     11\,     10\,     14\,      9\,      7\,      6\,      8\,      4\,      5\,      3\,      1\,     2\,$ in $S_{20}(132,123)$ which corresponds to the $0$-$1$ sequence of length $19,$ $1     1     1     0     0     1     0     0     0     0     1     1     0     0     1     0     1     1     0  .$
The right figure is the plot of the permutation $\sigma=19\,    20\,    16\,    17\,    18\,    14\,    15\,    12\,    13\,    11\,     4\,    5\,     6\,     7\,     8\,     9\,    10\,     2\,     3\,     1\,$ in $S_{20}(132,213)$ which corresponds to the $0$-$1$ sequence of length $19,$ $0     1     0     0     1     0     1     0     1     1     0     0     0     0     0     0     1     0     1.$ }
\label{fig123132}
\end{figure}

\medskip
\noindent
\textit{(c)  The case $T=\{132, 123\}$:}  
Let $\sigma\in S_n(132,123).$ Suppose $\sigma_k=n$ for some $k\in [n]$. 
To avoid the pattern $123$, we must have $\sigma_1>\sigma_2>\cdots>\sigma_{k-1};$ and 
to avoid $132,$ we must have $\sigma_i>n-k$ whenever $i<k.$ Therefore, recalling notation from
Section \ref{sec-not}, 
we must have $\sigma=(\delta\oplus 1)\ominus \phi$ where $\delta$ is a decreasing permutation
(possibly of length 0), 1 is the one-element permutation, and  $\phi$ avoids $123$ and $132$. 
Iterating this argument, we see that $\sigma$ is the skew sum of one or more permutations
of the form $\delta\oplus 1$.
In the terminology of \cite{AJ}, 
$\cup _n S_n(132,123)$ is the class $\ominus(\mathbf{D}\oplus \mathbf{1})$.
Observe that each ``$\oplus \mathbf{1}$'' in this decomposition corresponds to a right-to-left 
maximum of $\sigma$.

We  construct a map from  $S_n(132,123)$ to the set of $0$-$1$ sequences  
$x_1x_2\cdots x_{n-1}$ as follows.   Given $\sigma\in S_n(132,123)$, let 
$1\leq i_1<i_2<\cdots<i_k\leq n-1$ be the locations in $[n-1]$ of its right-to-left maxima.  
Then define $x_{i_1}=x_{i_2}=\cdots =x_{i_k}=1$ and $x_j=0$ for $j \notin \{ i_1,i_2,\cdots,i_k\}.$
It is not hard to see that this map is a bijection.
Moreover, fixing $x_n=x_0=1,$  we have 
\begin{align*}
\LDS_n(\sigma)&=\sum_{j=1}^{k+1}\max(1,i_j-i_{j-1}-1)\\
&= \sum_{j=1}^{k+1}(i_j-i_{j-1}-1)+\sum_{j=1}^{k+1}1_{i_j=i_{j-1}+1}\\
&= \sum_{i=1}^{n-1}1_{x_i=0}+\sum_{i=1}^{n}1_{x_{i-1}=x_i=1}\\
&= 2+\sum_{i=2}^{n-2}1_{x_i=0}+\sum_{i=2}^{n-1}1_{x_{i-1}=x_i=1}\\
	&=2+\sum_{i=2}^{n-2}(1-x_i)+\sum_{i=2}^{n-1}x_ix_{i-1}\\
	&=n-1-\sum_{i=2}^{n-2}x_i+\sum_{i=2}^{n-1}x_ix_{i-1}.
\end{align*}
Therefore with the uniform probability measure on $S_n(132,123)$, $\LDS_n\myeq \T_{n}.$

\begin{example} For the $0$-$1$ sequence $0 1 1 0 1$ of length $5$,  
the corresponding permutation of length $6$ in $S_{6}(132,123)$ is 
$\sigma=5\,      6\,      4\,      2\,      3\,      1$. See also the example in Figure~\ref{fig123132}(c).
\end{example}

\medskip
\noindent
\textit{(d) The case $T=\{132,213\}$:}
Let $\sigma\in S_n(132,213).$ Suppose $\sigma_k=n$ for some $k\in [n]$. To avoid the pattern 
$213$, we must have $\sigma_{1}<\sigma_{2}<\cdots<\sigma_k;$ and  to avoid $132,$ 
we must have $\sigma_i>n-k$ whenever $i<k.$ Therefore, $\sigma=\lambda\ominus \phi$
where $\lambda$ is the increasing permutation of length $k$, and $\phi\in S_{n-k}(132,213)$.
Iterating this argument, we see that $\sigma$ is the skew sum of one or more increasing
permutations.  Thus, this class is the reverse (or complement) of the layered permutations.

Observe also that
a member of $S_n(132,213)$ is uniquely determined by the locations of its right-to-left maxima. Therefore, as in the proof of part (c), we obtain a 
a bijection between $S_n(132,213)$ and the set of $0$-$1$ sequences $x_1x_2\cdots x_{n-1}$
by setting $x_i$ to be 1 (respectively 0) if $\sigma_i$ is (respectively, is not) 
a left-to-right maximum of $\sigma$.
Then we have 
\[\LDS_n(\sigma)=\sum_{i=1}^{n-1}1_{x_i=1}+1\]
and 
\[\LIS_n(\sigma)=1+\text{length of the longest run of zeros in } x_1x_2\cdots x_{n-1}.\]
Therefore,  with the uniform probability measure on $S_n(132,213)$, we have 
$\LDS_n\myeq \Bin(n-1,1/2)+1$ and $\LIS_n\myeq \R_{n-1}+1.$ 
\end{proof}

\begin{example}The permutation $\sigma=5\,     6\,  1\,     2\,     3\, 4$ in $S_{6}(132,213)$
 corresponds to the $0$-$1$ sequence $0 1 0 0 0$ of length $5$.  
See also the example in Figure \ref{fig123132}(d).
\end{example}

Recall that  a \textit{composition} of an integer $n$ is a way of writing $n$ as the sum of an ordered
 sequence of positive integers. Two sequences with the same terms but in different order correspond to different compositions but to  the same \textit{partition} of their sum. 
 It is known that each positive integer $n$ has $2^{n-1}$ distinct compositions, and that there is an explicit bijection between the compositions of $n$ and the set of layered permutations of length $n$. Probabilistic properties of  random integer compositions and partitions have been studied in literature. 
 From this viewpoint, our results for group (d) correspond to results on page 337 and in 
 Theorem 8.39 of \cite{ToHe}.

We conclude this subsection by explaining how the results of Table \ref{summary} follow from 
Theorem \ref{MainTh1} and some classical results.
 The mean and standard deviation of $\D_n$ are asymptotically
\begin{align*}
\E(\D_n) \sim \frac{5n}{6} \text{ and } \SD(\D_n)\sim \frac{5n}{6 \sqrt 2}.
\end{align*}
It is easy to show that 
\begin{align*}
\E(\T_{n})&=\frac{3n}{4} \text{ and }\SD(\T_{n})\sim\frac{\sqrt{n}}{4}.
\end{align*}
Moreover, the fact that the limiting distribution of  $(\T_n-\E(\T_n))/\SD(\T_n)$ is standard normal 
is a consequence of the following lemma, which can be found for example in  \cite{Ch}.
\begin{lemma}[\cite{Ch}, Theorem 7.3.1]
\label{Chung}
Suppose that $\{Y_n\}$ is a sequence of $m$-dependent, uniformly bounded random variables such that as $n\to +\infty$
\[\frac{\SD(S_n)}{n^{1/3}}\to +\infty\]
where $S_n=Y_1+Y_2+\cdots +Y_n$. Then $(S_n-\E(S_n))/\SD(S_n)$ converges in distribution to the standard normal distribution.
\end{lemma}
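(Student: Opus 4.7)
The plan is to use Bernstein's big-block-small-block method. I would partition the index set $\{1,\dots,n\}$ into alternating ``big'' blocks of length $p=p_n$ and ``small'' blocks of length $q=m$, exploit $m$-dependence to render the big-block sums mutually independent, and apply the Lindeberg--Feller CLT to those independent sums.

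First I would fix $q=m$, choose $p_n\to\infty$ (to be specified), and let $k\approx n/(p+q)$ be the number of big blocks $B_1,\dots,B_k$ separated by gaps $G_1,\dots,G_k$ of length $m$. Set $U_i=\sum_{j\in B_i}Y_j$ and $V_i=\sum_{j\in G_i}Y_j$. Writing $M$ for the uniform bound on the $|Y_j|$, I get $|U_i|\le pM$ and $|V_i|\le mM$; crucially, since consecutive big blocks are separated by a gap of length at least $m$, the $m$-dependence of $\{Y_j\}$ implies that $U_1,\dots,U_k$ are mutually independent. Then $S_n=\sum_i U_i+\sum_i V_i$ up to a bounded remainder at the end.

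Next I would show that the gap sum $W_n:=\sum_i V_i$ is negligible. Since $|V_i|\le mM$ and the $V_i$ have bounded dependence range, $\Var(W_n)=O(km^2M^2)=O(nm^2M^2/p_n)$. To force $\Var(W_n)=o(\Var(S_n))$, I require $p_n\gg n/\Var(S_n)$. A Cauchy--Schwarz bound on the cross-covariance between $\sum_i U_i$ and $W_n$ then gives $\Var(\sum_i U_i)\sim\Var(S_n)$, and by Slutsky's theorem the desired CLT for $S_n$ reduces to the CLT for the independent sum $\sum_i U_i$.

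For this independent sum I would apply Lindeberg--Feller. With $s_k^2=\sum_i\Var(U_i)\sim\Var(S_n)$, the Lindeberg condition is automatic from the uniform bound $|U_i-\E U_i|\le 2pM$ once $p_n/s_k\to 0$, i.e., $p_n=o(\SD(S_n))$. The two constraints on $p_n$ combine to
\[
\frac{n}{\SD(S_n)^2}\;\ll\;p_n\;\ll\;\SD(S_n),
\]
which is feasible exactly when $\SD(S_n)^3\gg n$, i.e., when $\SD(S_n)/n^{1/3}\to\infty$---precisely the hypothesis. Taking $p_n$ as (say) the geometric mean of the two bounds works. The main obstacle is the careful bookkeeping needed to ensure that the cross-covariance between $\sum_i U_i$ and $W_n$, and the boundary remainder at the tail, do not upset the variance asymptotics; the essential probabilistic input is standard and rests entirely on choosing $p_n$ in the window opened up by the $n^{1/3}$ hypothesis.
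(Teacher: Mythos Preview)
The paper does not supply its own proof of this lemma; it is quoted verbatim as Theorem~7.3.1 of Chung's textbook and used as a black box. Your sketch via Bernstein's big-block/small-block method is exactly the classical proof (and is essentially how Chung proves it): the two constraints $n/\Var(S_n)\ll p_n\ll\SD(S_n)$ are precisely what force the hypothesis $\SD(S_n)/n^{1/3}\to\infty$, and your variance bookkeeping (including the Cauchy--Schwarz step to get $\Var(\sum_i U_i)\sim\Var(S_n)$) is correct.
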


\noindent
Finally,
Erd\"{o}s and Renyi \cite{ER} proved that 
\begin{equation}  
\lim_{n\to \infty}\frac{\R_n}{\log_2 n}=1\text{ a.s.}
\end{equation}
It is also known that as $n$ tends to infinity $\E(\R_n)\sim\log_2n$, $\SD(\R_n)$ converges to a positive constant, and $\R_n-\log_2n$ possesses no limit distribution; for more on this topic see the survey paper \cite{S}.

\subsection{Longest alternating subsequences}
\label{LASsection}

In this section, we will present our results on the asymptotic behaviour of the length of the longest alternating subsequence, $\LAS_n$, on $S_n(\taua,\taub)$ with $\taua, \taub \in S_3$ 
under the uniform probability distribution. For a result on $\LAS_n$ for longer patterns, see Theorem~\ref{thm.alt4} in section~\ref{longerpattern}.

\begin{theorem} 
\label{lasthm}
Let T denote a 2-element subset of $S_3.$ Consider $S_n(T)  $ with the uniform probability measure. Then we have the following:
\begin{itemize}
\item[a)] If $T=\{132,321\},$ then we have $\LAS_n(\sigma)\leq 3$ for any $\sigma\in S_n(T).$
\item[b)] If $T=\{132,231\},$ then we have $\LAS_n(\sigma)\leq 3$ for any $\sigma\in S_n(T).$
\item[] If $T=\{132,312\},$ then $\LAS_n$ is asymptotically normal with the mean $\E^T_n(\LAS_n)\sim n/2$  and the standard deviation $\SD^T_n(\LAS_n)\sim \sqrt{n}/2$.
\item[c)] If $T=\{132,123\},$ then $\LAS_n$ is asymptotically normal with the mean $\E^T_n(\LAS_n)\sim n/2$  and the standard deviation $\SD^T_n(\LAS_n)\sim \sqrt{n}/2$.
\item[d)] If $T=\{132,213\},$ then $\LAS_n$ is asymptotically normal with the mean $\E^T_n(\LAS_n)\sim n/2$  and the standard deviation $\SD^T_n(\LAS_n)\sim \sqrt{n}/2$.
\end{itemize}
\end{theorem}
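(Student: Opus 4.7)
The plan is to use a general identity for $\LAS_n$. For any $\sigma \in S_n$, setting $d_i := 1_{\sigma_i > \sigma_{i+1}}$ for $i \in [n-1]$ and $d_0 := 0$,
\[
\LAS_n(\sigma) \;=\; 1 + \#\bigl\{i \in [n-1] : d_{i-1} \ne d_i\bigr\}.
\]
Equivalently, $\LAS_n(\sigma) = r(\sigma) + 1_{\sigma_1 > \sigma_2}$, where $r(\sigma)$ is the number of alternating (maximal monotone consecutive) runs of $\sigma$. The identity can be proved in a few lines by showing that an optimal alternating subsequence consists of the first element (when the first transition is a descent) together with all interior peaks and valleys, and that no alternating subsequence can do better.

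For parts (a) and (b)[first claim], I apply the explicit structures from Theorem~\ref{MainTh1}. For $T=\{132,321\}$, every $\sigma\in S_n(T)$ has at most one descent --- two ascending segments joined at the value $m$ --- so the extended $d$-sequence has at most two switches, giving $\LAS_n(\sigma)\le 3$. For $T=\{132,231\}$, the V-shape of $\sigma$ yields a $d$-sequence of the form $1^{k-1}0^{n-k}$, again with at most two switches, so $\LAS_n(\sigma)\le 3$.

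For parts (c), (d), and the second claim of (b) (namely $T=\{132,312\}$), I combine the identity with a bijection $S_n(T) \leftrightarrow \{0,1\}^{n-1}$; under the uniform measure this makes the bijection variables iid Bernoulli$(1/2)$. The key step is to express $d_i$ as a local function of these variables: for (d) one directly has $d_i = x_i$ (positions preceding a run drop coincide with the bijection's $1$'s); for (c), setting $x_n := 1$, one checks $d_i = 1 - (1-x_i)x_{i+1}$. For $\{132,312\}$, which is not covered by Theorem~\ref{MainTh1}, I derive a new bijection from the recursive structure: avoidance of $132$ forces positions right of $\sigma^{-1}(n)$ to hold smaller values than those on the left, and avoidance of $312$ forces that right block to be decreasing; iterating yields $\sigma$ as an increasing staircase of ``level maxima'' separated by decreasing tails, parametrized uniquely by $(y_1,\ldots,y_n)\in\{0,1\}^n$ with $y_1=1$ (and $y_i=1$ iff position $i$ is a level maximum). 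A case analysis on $(y_i,y_{i+1}) \in \{0,1\}^2$, using the values at each level maximum and the leftmost/rightmost entry of each adjacent tail, yields $d_i = 1 - y_{i+1}$.

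In all three of these cases, $\LAS_n - 1$ is then a sum of $O(n)$ bounded $m$-dependent indicators with $m\le 2$. Direct computation gives $\E^T_n(\LAS_n) = (n+1)/2 + O(1)$ and $\SD^T_n(\LAS_n) \sim \sqrt n/2$, with pairwise covariances at separations $1$ and $2$ either vanishing or cancelling; asymptotic normality then follows from Lemma~\ref{Chung} since the standard deviation dominates $n^{1/3}$. The hardest step is the $\{132,312\}$ case: Theorem~\ref{MainTh1} does not apply, so I must verify the recursive structure from scratch, establish the new binary-string bijection, and carefully track the value ranges at each level of the recursion in order to confirm the clean relation $d_i = 1 - y_{i+1}$ across all four subcases of $(y_i,y_{i+1})$.
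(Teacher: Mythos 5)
Your proposal is correct and follows essentially the same route as the paper: the same explicit bijections onto binary strings with iid Bernoulli$(1/2)$ coordinates (including the same observation for $\{132,312\}$ that entries above $\sigma_1$ increase and entries below decrease), reduction of $\LAS_n$ to a sum of bounded $m$-dependent local indicators, and the $m$-dependent CLT of Lemma~\ref{Chung}. The only difference is bookkeeping: you use the exact alternating-runs identity $\LAS_n=1+\#\{i: d_{i-1}\neq d_i\}$ (which is correct, and your local formulas for $d_i$ and the covariance cancellation all check out), whereas the paper works with the approximate relation $|\LAS_n-2\A_n|\leq 4$ for a pattern-counting statistic $\A_n$.
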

\begin{remark} Parts (a) through (d) correspond to the labelling in Figures \ref{graph} and
\ref{lastfigure}.
Note that since $|\LAS_n(\sigma)-\LAS_n(\sigma^c)|\leq 1$ and $|\LAS_n(\sigma)-\LAS_n(\sigma^r)|\leq 1$, it suffices to consider one representative pair from each symmetry class in Figure~\ref{graph} except the symmetry class in Figure~\ref{graph}-(b). 
\end{remark}

The proof uses the structures exploited in the proof of Theorem \ref{MainTh1}.  The proof of part (b)
needs a bit of new work, so we prove it last.

\begin{proof}[Proof of Theorem \ref{lasthm}] \textit{(a)} Assume $\sigma\in S_n(132,321)$. 
Recall from the proof 
of Theorem~\ref{MainTh1}(a) that $\sigma$ is either the identity or else it can be represented 
as $\sigma=213[\taua,\taub,\tauc]$, the inflation of $213$ by increasing permutations 
$\taua, \taub, \tauc$ (Figure~\ref{lastfigure}(a)). Therefore, $\LAS_n(\sigma)\leq 3$. 

\medskip
\noindent
\textit{(c)}  Recall that the bijection in the proof of Theorem \ref{MainTh1}$(c)$
defines $x_i=1$ if $\sigma_{i}$ is a right-to-left maximum, and $x_i=0$ otherwise, for each $i\in[n-1]$.
Then $x_1, x_2,\cdots, x_{n-1}$ are independent Bernoulli$(\frac{1}{2})$ random variables.

Let $\displaystyle \A_n=\sum_{i=1}^{n-1}1_{x_i=0}1_{x_{i+1}=1}.$ Note that $|\LAS_n-2\A_n|\leq 4.$ 
It is easy to show that $\E(\A_n)\sim\frac{n}{4}$ and $\SD(\A_n)\sim \frac{\sqrt n}{4}.$ Moreover, by Lemma~\ref{Chung},  normalized $\A_n$ converges to the standard normal distribution.
Therefore $\E^T_n(\LAS_n)\sim 2\E(\A_n)$ and 
$\SD^T_n(\LAS_n)\sim2\SD(\A_n),$ and we get the asymptotic normality of $\LAS_n.$

\medskip
\noindent
\textit{(d)} Using the bijection from the proof of Theorem \ref{MainTh1}$(d)$, the proof will be the same as in part $(c)$ above.

\medskip
\noindent
\textit{(b)} Let $\sigma\in S_n(132,231)$.  As shown in the proof of Theorem \ref{MainTh1},  
$\sigma$ is the 
juxtaposition of a decreasing and an increasing subsequence. Therefore $\LAS_n(\sigma)\leq 3$. 

Let $\sigma\in S_n(132,312)$. Note that all the entries greater (respectively, smaller) than $\sigma_1$
 must be in the increasing (respectively, decreasing) order, otherwise we would have a $132$ 
 (respectively, $312$) pattern. 

For $i\in[ n-1]$, define $x_i$ to be $1$ if $\sigma_{i+1}>\sigma_1$, and $0$ otherwise. 
Then $x_1, x_2,\cdots, x_{n-1}$ are  independent Bernoulli$(\frac{1}{2})$ random variables.

Set $\displaystyle \A_n=\sum_{i=1}^{n-1}1_{x_i=1}1_{x_{i+1}=0}.$ Then the rest of the proof 
follows as for part $(c)$.
\end{proof}

\subsection{Results for patterns of length $k\geq4$}
\label{longerpattern}

In this subsection, we will present our results on longer patterns. Theorem \ref{monotonpatt}  concerns monotone patterns.  It yields precise properties 
because we can exploit the Schensted correspondence, which we review below.
The next group of results proves that $\LIS_n$ is unlikely to be $o(n)$ on $S_n(\tau)$ when $\tau$
is of the form $\tau=k\tau_2\cdots \tau_k \in S_k$.  
Finally, Theorem \ref{thm.alt4} proves an
analogous result for $\LAS_n$ on $S_n(\tau)$ for some patterns $\tau$.

First we  recall some basic definitions used in the algebraic combinatorics of permutations. For more on this topic, see chapter 7 of \cite{Bo} or the survey paper \cite{AD}.
A \textit{partition} $\lambda=(\lambda_1,\cdots,\lambda_j)$ of an integer $n\geq 1$ is a sequence of integers with $\lambda_1\geq\lambda_2\geq \cdots \geq\lambda_j\geq1$ and $\sum_i\lambda_i=n.$ A partition $\lambda$ can be identified with its associated \textit{Ferrers shape} of $n$ cells with $\lambda_i$ left justified cells in row $i.$ A (standard) Young Tableau is a Ferrers shape on $n$ boxes in which each number in $[n]$ is assigned to its own box, so that the numbers are increasing within each
row and each column (going down). The Schensted~\cite{Sc} correspondence induces a bijection between permutations and pairs $(P,Q)$ of Young Tableaux of the same shape. The number of Young Tableaux of a given shape $\lambda$, denoted by $d_{\lambda}$, is given by the celebrated \textit{hooklength formula} of Frame and Robinson:
\[d_{\lambda}=\frac{n!}{\prod_ch_c}.\]
The \textit{hook length} $h_c$ of a cell $c$ in a Ferrers shape is the number of cells to the right of $c$ in its row, the cells below $c$ in its column, and the cell $c$ itself. For illustration, in the following Ferrers shape corresponding to the partition $\lambda=(5,4,2)$, each cell $c$ is occupied by its hook length $h_c$. (Lest confusion arise, we note that it is not a Young Tableau.) 
\begin{center}
\ytableausetup{textmode}
\begin{ytableau}
 7&6& 4 &3 &1  \\
 5&4&2 & 1 \\
 2&1
\end{ytableau}
\end{center}

Importantly, if the Schensted correspondence associates the shape $\lambda$ with a given $\sigma \in S_n,$ then $\lambda_1=\LIS_n(\sigma).$ In \cite{Sc}, Schensted shows also that the length of the decreasing subsequences in $\sigma$ are encoded by the $P$-tableau of $\sigma$: For any $\sigma \in S_n$, we have 
\begin{equation}
\label{Schensted} 
P_{\sigma^r}=P^t_{\sigma}
\end{equation}
where $P_{\sigma^r}$ is the $P$-tableau of the reverse $\sigma^r$ of $\sigma$ and $P^t_{\sigma}$ is the transpose (reflection through anti-diagonal) of $P_{\sigma}$.

\begin{theorem}
\label{monotonpatt}
Consider $S_n(\tau)$ with $\tau=k(k-1)\cdots1.$ Then whenever $0<\epsilon<k-2$, we have 
\begin{equation}
  \label{eq.monoA}
\lim_{n\rightarrow\infty} \bP^{\tau}_n\left(\LIS_n>\frac{1+\epsilon}{k-1}\,n\right)^{1/n}\;=\; 
  \left[(1+\epsilon)^{(1+\epsilon)}\left(1-\frac{\epsilon}{k-2}\right)^{k-2-\epsilon}\right]^{-\frac{2}{k-1}}
\end{equation}
and
\begin{equation}
   \label{eq.monoB}
   \bP^{\tau}_n\left(\LIS_n<\frac{n}{k-1}\right)=0 \hspace{5mm}\hbox{for every $n$}.  
\end{equation}
Therefore
\[\frac{\LIS_n}{n}\;\to\;  \frac{1}{k-1} \hspace{5mm} \text{ in probability as } n\to \infty.\]
\end{theorem}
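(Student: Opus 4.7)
My plan is to address the two equations separately and deduce the convergence in probability from them. Equation~(\ref{eq.monoB}) is immediate from the Erd\"{o}s--Szekeres lemma: every $\sigma\in S_n(\tau)$ satisfies $\LDS_n(\sigma)\leq k-1$, and $\LIS_n(\sigma)\cdot\LDS_n(\sigma)\geq n$ then forces $\LIS_n(\sigma)\geq n/(k-1)$. Granting~(\ref{eq.monoA}), the convergence in probability will follow by noting that the bracketed expression on the right is strictly greater than $1$ for every $\epsilon\in(0,k-2)$: the function $f(\epsilon):=(1+\epsilon)\log(1+\epsilon)+(k-2-\epsilon)\log(1-\epsilon/(k-2))$ satisfies $f(0)=f'(0)=0$ and $f''(\epsilon)>0$ on $(0,k-2)$, so the right side of~(\ref{eq.monoA}) is strictly less than $1$ and $\bP_n^\tau(\LIS_n/n>(1+\epsilon)/(k-1))\to 0$ exponentially. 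Together with~(\ref{eq.monoB}) this pins $\LIS_n/n$ to $1/(k-1)$ in probability.

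The core of the argument will be~(\ref{eq.monoA}), which I would handle through the Schensted correspondence and the hook-length formula. By~(\ref{Schensted}), a permutation $\sigma$ belongs to $S_n(\tau)$ exactly when its Schensted shape $\lambda$ has at most $k-1$ rows, and $\LIS_n(\sigma)=\lambda_1$. Writing $\ell(\lambda)$ for the number of parts,
\begin{align*}
|S_n(\tau)|&\;=\;\sum_{\substack{\lambda\vdash n\\ \ell(\lambda)\leq k-1}}d_\lambda^2,\\
|S_n(\tau;\,\LIS_n>m)|&\;=\;\sum_{\substack{\lambda\vdash n,\ \ell(\lambda)\leq k-1\\ \lambda_1>m}}d_\lambda^2,
\end{align*}
with $m=\lfloor(1+\epsilon)n/(k-1)\rfloor$. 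The number of admissible partitions is $O(n^{k-2})$, so on the exponential scale each sum is dictated by its single largest term. Setting $\alpha_i:=\lambda_i/n$ and applying Stirling's formula to the product form of the hook-length formula, $d_\lambda=n!\,\prod_{1\leq i<j\leq k-1}(\ell_i-\ell_j)/\prod_{i=1}^{k-1}\ell_i!$ with $\ell_i=\lambda_i+k-1-i$, I would obtain the uniform estimate
\[
\tfrac{1}{n}\log d_\lambda\;=\;H(\alpha)\,+\,O\!\left(\tfrac{\log n}{n}\right),\qquad H(\alpha):=-\sum_{i=1}^{k-1}\alpha_i\log\alpha_i,
\]
with the convention $0\log 0=0$. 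Maximizing $H$ on the simplex $\{\sum_i\alpha_i=1,\ \alpha_i\geq 0\}$ gives $\alpha_i=1/(k-1)$ and $\max H=\log(k-1)$; imposing the extra constraint $\alpha_1\geq(1+\epsilon)/(k-1)$ forces, by strict concavity of $H$, the binding maximizer $\alpha_1^\ast=(1+\epsilon)/(k-1)$ and $\alpha_j^\ast=(k-2-\epsilon)/((k-1)(k-2))$ for $j\geq 2$. A short computation, recognizing the gap as the Kullback--Leibler divergence from $\alpha^\ast$ to the uniform distribution on $[k-1]$, gives
\[
\log(k-1)-H(\alpha^\ast)\;=\;\tfrac{1}{k-1}\bigl[(1+\epsilon)\log(1+\epsilon)+(k-2-\epsilon)\log(1-\epsilon/(k-2))\bigr],
\]
and exponentiating twice the negative of this gap recovers precisely the right side of~(\ref{eq.monoA}).

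The main technical obstacle will be producing matching upper and lower bounds uniformly in $\lambda$. The upper bound on the numerator should be the easier direction: the elementary inequality $d_\lambda\leq\binom{n}{\lambda_1,\ldots,\lambda_{k-1}}$, obtained by comparing each hook length to its row contribution $\lambda_i-j+1$, together with continuity of $H$ on the closed simplex, will absorb the boundary shapes for which some $\alpha_i\to 0$. For the matching lower bound on the denominator it suffices to exhibit a single partition---the nearest-integer rounding of the uniform shape---and apply a direct Stirling calculation at that one point to get $d_\lambda^2\geq(k-1)^{2n}/n^{C}$. The analogous nearest-integer rounding of $\alpha^\ast$ then supplies the companion lower bound on the numerator, and together these produce the matching exponential rates demanded by~(\ref{eq.monoA}).
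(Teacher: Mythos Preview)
Your proposal is correct and follows essentially the same route as the paper: Erd\H{o}s--Szekeres for~(\ref{eq.monoB}), then Schensted plus the hook-length formula to reduce~(\ref{eq.monoA}) to a Stirling/entropy optimization over the $(k{-}1)$-simplex with the constraint $\alpha_1\geq(1+\epsilon)/(k-1)$, with a polynomial number of shapes absorbing the sum, a single-shape lower bound, and a compactness/continuity argument for the matching upper bound. The only cosmetic differences are that the paper sandwiches the hook product between $\prod_i\lambda_i!$ and $\prod_i(\lambda_i+k-1)!$ rather than using the explicit product formula, and cites Regev's $L(\tau)=(k-1)^2$ for the denominator rate whereas you derive it directly from the unconstrained optimum $H=\log(k-1)$; neither difference is substantive.
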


\begin{proof}[Proof of Theorem~\ref{monotonpatt}]

Fix the decreasing pattern $\tau=k(k-1)\cdots1.$ For any $\sigma \in S_n(\tau)$, there will be at most $k-1$ rows in the corresponding Ferrers shape because $\LDS_n(\sigma)\leq k-1$ and $\LDS_n(\sigma)=\LIS_n(\sigma^r)=$ number of columns in $P_{\sigma^r}=$ number of rows in $P_{\sigma}$ by  Equation~(\ref{Schensted}). Note also that it follows from the Erd\"{o}s-Szekeres lemma that $\LIS_n(\sigma)\geq \frac{n}{k-1}$ for all $\sigma\in S_n(\tau)$. This proves 
Equation~(\ref{eq.monoB}).

Let $\mathcal{H}_{n,k}$ denote the set of partitions of $n$ with at most $k-1$ rows in their
corresponding Ferrers shape. For notational convenience, we will consider 
\[   \mathcal{H}_{n,k}:= \{(\lambda_1,\cdots,\lambda_{k-1})\in \mathbb{Z}^{k-1}: \lambda_1\geq\cdots\geq\lambda_{k-1}\geq0, \sum_{i=1}^{k-1}\lambda_i=n\}.
\]
For a given $\epsilon>0,$ let $\mathcal{H}_{n,k}^{\epsilon}:=\{\lambda\in \mathcal{H}_{n,k}: \lambda_1>n\frac{1+\epsilon}{k-1}\}$ and let 
\[\mathcal{C}^{\epsilon}_k:=\left\{(x_1,\cdots,x_{k-1}) \in \mathbb{R}^{k-1} 
: \; x_1\geq x_2\geq \cdots \geq x_{k-1}\geq 0,\; \sum_{i=1}^{k-1}x_i=1, \; x_1\geq \frac{1+\epsilon}{k-1} \right\}.
\]

Let $\Lambda_n$ be a random variable taking values in $\mathcal{H}_{n,k}$ with the distribution $\mathcal{Q}\equiv\mathcal{Q}_{n,k}$ given by
\begin{equation}
   \label{eq.monoC}
\mathcal{Q}(\Lambda_n=\lambda):=\frac{d_{\lambda}^2}{|S_n(\tau)|}.
\end{equation}
This distribution can be considered as a ``shape distribution'' on the Ferrers shapes corresponding to the uniform distribution on permutations in $S_n(\tau).$ In particular, $\bP^{\tau}_n(\LIS_n(\sigma)=a)=\mathcal{Q}(\Lambda_{n,1}=a).$

Note that for $\lambda=(\lambda_1,\cdots,\lambda_{k-1})\in \mathcal{H}_{n,k}$, we have
\[\prod_{i=1}^{k-1}\lambda_i! \;\leq\; \prod_c h_c\;\leq\;  \prod_{i=1}^{k-1}(\lambda_i+k-1)!,\]
and hence
\begin{equation}
   \label{eq.monohook}
   \frac{n!}{\prod_{i=1}^{k-1}(\lambda_i+k-1)!} \;\leq \; \frac{n!}{\prod_c h_c} \;\leq \; 
      \frac{n!}{\prod_{i=1}^{k-1}\lambda_i!}.
\end{equation}
If $x\in \mathcal{C}^{\epsilon}_k$ and the sequence 
$\{\lambda(n)\}$      
(with $\lambda(n)\in \mathcal{H}^{\epsilon}_{n,k}$) satisfies 
$\lim_{n\rightarrow\infty}\lambda(n)/n \,=\,x$,
then by  Stirling's approximation, $n!\sim n^ne^{-n}\sqrt{2\pi n}$, we see 
from Equation (\ref{eq.monohook}) that
\begin{equation}
   \label{eq.monoE}
 \lim_{n\rightarrow\infty} d_{\lambda(n)}^{1/n} \;=\; D(x)  \; :=  \;
    \frac{1}{x_1^{x_1}\cdots x_{k-1}^{x_{k-1}}}
\end{equation}
where $0^0=1.$ Note that the maximum of $D(x)$ over $\mathcal{C}^{\epsilon}_k$ is attained 
at $x=\alpha$ defined by  $\alpha_1=\frac{1+\epsilon}{k-1}$ and 
$\alpha_2=\cdots=\alpha_{k-1}=\frac{1}{k-1}(1-\frac{\epsilon}{k-2})$, with the value $D(\alpha) \,=\,
(k-1)/[(1+\epsilon)^{(1+\epsilon)}(1-\frac{\epsilon}{k-2})^{k-2-\epsilon}]^{1/(k-1)}$.
(To see this, first note that if we fix $x_1=A$, then a Lagrange multiplier calculation
shows that the maximum is at $x(A):=(A,\frac{1-A}{k-2},\ldots,\frac{1-A}{k-2})$; then show
that $\log D(x(A))$ is a concave function of $A\in(0,1)$ with maximum at $A=\frac{1}{k-1}$.)

For $k\geq 2,$ Regev \cite{R} proved that the Stanley-Wilf limit of the increasing pattern $\tau=12\cdots k$ is given by
\begin{equation}
 \label{eq.Regev}
   L(12\cdots k)\;=\;(k-1)^2.
\end{equation}
Using Equations (\ref{eq.monoC}) and (\ref{eq.Regev}), we see that 
\begin{equation}
   \label{eq.monoF}
    \liminf_{n\rightarrow\infty} \mathcal{Q}(\Lambda_n\in \mathcal{H}_{n,k}^{\epsilon})^{1/n}
      \;\geq \;  \frac{D(\alpha)^2}{(k-1)^2}.
\end{equation}
For the complementary inequality, let $\hat \lambda(n):=\text{argmax}_{\lambda \in \mathcal{H}_{n,k}^{\epsilon}}\mathcal{Q}(\Lambda_n=\lambda).$ Then
\begin{equation}
   \label{eq.monoH}
   \mathcal{Q}(\Lambda_n\in \mathcal{H}_{n,k}^{\epsilon}) \;\leq \;
     |\mathcal{H}_{n,k}^{\epsilon}|\, \mathcal{Q}(\Lambda_n=\hat \lambda(n)).
 \end{equation}
Since $\mathcal{C}^{\epsilon}_k$ is a compact set and $\frac{\hat \lambda(n)}{n}\in \mathcal{C}^{\epsilon}_k$, there exists a subsequence $\hat \lambda(n_l)$  such that $\frac{\hat \lambda(n_l)}{n_l}$ converges to a 
point $z\in \mathcal{C}^{\epsilon}_k$ and 
\begin{equation}
   \label{eq.monoK}
   \lim_{l\to \infty}\mathcal{Q}(\Lambda_{n_l}=\hat \lambda(n_l))^{1/n_l}\;=\;\limsup_{n\to\infty}\mathcal{Q}(\Lambda_n=\hat \lambda(n))^{1/n}.
\end{equation}
Since $|\mathcal{H}_{n,k}^{\epsilon}|=O(n^{k-1})$ as $n\to \infty,$ we see from 
Equations (\ref{eq.monoH}) and (\ref{eq.monoK}) and the argument for Equation (\ref{eq.monoF})
that 
\begin{equation}
   \label{eq.monoG}
    \limsup_{n\rightarrow\infty} \mathcal{Q}(\Lambda_n\in \mathcal{H}_{n,k}^{\epsilon})^{1/n}
     \;\leq \;  \frac{D(z)^2}{(k-1)^2} \;\leq \;  \frac{D(\alpha)^2}{(k-1)^2}.
\end{equation}
Equations (\ref{eq.monoF}) and (\ref{eq.monoG}) imply that 
Equation (\ref{eq.monoA}) holds, and the theorem follows.
\end{proof} 

The next group of results concerns $\LIS_n$ when $\tau_1=k$.  The key observation
is that $\LIS_n$ is at least as big as the number of left-to-right maxima 
(Equation (\ref{eq.LISLR})).  Theorem  \ref{MainTh2} then tells us that $\LIS_n$ 
is at least a constant times $n$, with very high probability.

Let $\RL_n(\sigma)$ be the number of right-to-left maxima in the permutation $\sigma,$ and 
$\LR_n(\sigma)$ be the number of left-to-right maxima in $\sigma.$ Note that the left-to-right maxima (right-to-left maxima) in $\sigma$ form an increasing (decreasing) subsequence in $\sigma.$ In particular,
\begin{equation}
   \label{eq.LISLR}
   \LIS_n(\sigma)\geq \LR_n(\sigma)\hspace{5mm} \text{ and } \hspace{5mm}
     \LDS_n(\sigma)\geq \RL_n(\sigma).
\end{equation}     
Recall also the notation $S_n(\tau;\mathcal{P})$ from Section \ref{sec-not}.

\begin{theorem} 
\label{MainTh2}
Let $\tau=k\tau_2\cdots \tau_k \in S_k$. For every real $\delta$ such that $0<\delta<1/L(\tau)$, the following strict inequality holds:
\[\limsup_{n\to \infty}|S_n(\tau;\LR_n<\delta n)|^{1/n}<L(\tau).\]
\end{theorem}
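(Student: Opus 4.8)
The plan is to show that the uniform probability $\bP^{\tau}_n(\LR_n<\delta n)$ decays exponentially in $n$; since $|S_n(\tau)|^{1/n}\to L(\tau)$, this is equivalent to the stated bound. Write $L=L(\tau)$ and $\tau'=\tau_2\cdots\tau_k\in S_{k-1}$.

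The foundational observation is that $\tau_1=k$ makes $\tau$ \emph{sum-indecomposable}: since its first entry is its largest, $\tau$ cannot be written as $\alpha\oplus\beta$ with $\alpha,\beta$ nonempty. Consequently $\bigcup_n S_n(\tau)$ is closed under direct sums (an occurrence of $\tau$ in $\alpha\oplus\beta$ would split as a direct sum and hence lie entirely in $\alpha$ or in $\beta$), so $|S_{a+b}(\tau)|\ge|S_a(\tau)|\,|S_b(\tau)|$. By Fekete's lemma applied to the superadditive sequence $\log|S_n(\tau)|$, the limit $L$ equals the supremum, giving the clean bound $|S_j(\tau)|\le L^{\,j}$ for every $j\ge1$. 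This is the only place the value $L$ (rather than an $o(1)$-approximation) enters, and it is what pins the threshold at $1/L$.

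The engine of the argument is the bijection obtained by inserting the largest value. For $\rho\in S_{n-1}(\tau)$ and a position $p$, inserting the new maximum $n$ just before the $p$-th entry of $\rho$ produces a member of $S_n(\tau)$ precisely when the resulting suffix $\rho_p\rho_{p+1}\cdots\rho_{n-1}$ avoids $\tau'$ (otherwise the inserted maximum plays the role of $\tau_1=k$ and completes a copy of $\tau$). Because suffixes are nested, $\tau'$-avoidance is monotone in $p$, so the valid positions form an interval $\{p^\ast(\rho),\dots,n\}$ whose length $V(\rho)=n-p^\ast(\rho)+1$ equals one plus the length of the longest $\tau'$-avoiding suffix of $\rho$; this recovers $|S_n(\tau)|=\sum_{\rho\in S_{n-1}(\tau)}V(\rho)$. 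Crucially, inserting the maximum at position $p$ destroys every left-to-right maximum of $\rho$ lying at or beyond $p$, so the result has $\LR_{<p}(\rho)+1$ left-to-right maxima, where $\LR_{<p}$ counts them in positions $<p$. Thus a permutation with few left-to-right maxima must be built by inserting its top value far to the \emph{left}, which forces a \emph{long} $\tau'$-avoiding suffix in $\rho$; and long $\tau'$-avoiding suffixes are exponentially atypical because $L(\tau')\le L$ (as $S_n(\tau')\subseteq S_n(\tau)$).

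Quantitatively, I would peel off the top value and split $S_n(\tau;\LR_n<\delta n)$ according to $\LR_{n-1}(\rho)$: either $\rho$ already has few left-to-right maxima, in which case the contribution feeds an induction on $n$ weighted by $V(\rho)$ and is controlled by a Chernoff estimate on the exponential moment $\sum_\rho q^{\LR_{n-1}(\rho)}$ (choosing $q<1$ turns ``few maxima'' into a decaying factor $q^{-\delta n}$, with the optimum dropping below $L$ exactly when $\delta<1/L$); or $\rho$ has at least $\delta n$ left-to-right maxima but the top value is inserted before the $(\delta n)$-th of them, which forces $\rho$ to have a $\tau'$-avoiding suffix at least as long as the erased block of maxima. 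I expect the main obstacle to be this second, ``reduction'' term: bounding it requires showing that a uniform $\tau$-avoider almost never has a $\tau'$-avoiding suffix of linear length (using $|S_j(\tau)|\le L^{\,j}$ and $|S_\ell(\tau')|\le L(\tau')^{\ell}$ with $L(\tau')\le L$), and tracking how the length of that suffix trades against the size of the erased block. The supermultiplicative bound and the insertion recursion are robust, routine ingredients; concentrating all of the coupling between the left-to-right-maxima statistic and the staircase of running maxima into this single estimate, and matching the exponential rate to the precise constant $1/L$, is the delicate part.
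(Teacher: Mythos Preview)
Your groundwork is sound: the sum-indecomposability of $\tau$, the bound $|S_j(\tau)|\le L^j$, the insert-the-maximum bijection with its suffix criterion, and the identity $\LR_n(\sigma)=\LR_{<p}(\rho)+1$ are all correct and relevant. But the plan for closing the estimate has a genuine gap in \emph{both} branches, not only the one you flag. The Chernoff step presupposes control of $\sum_{\rho\in S_{n-1}(\tau)} q^{\LR_{n-1}(\rho)}$ (or its $V$-weighted cousin), and that moment bound is essentially equivalent to the large-deviation statement you are trying to prove; your recursion does not supply an independent handle on it, so the induction is circular as written. In the reduction branch, the inequality ``length of $\tau'$-avoiding suffix $\ge$ number of erased maxima'' is true but trivially so (the erased maxima sit inside that suffix), and in your Case~B the number of erased maxima is only guaranteed to exceed $\LR_{n-1}(\rho)-\delta n+1$, which can be as small as~$1$. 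Even if you could force a suffix of linear length~$\ell$, the count of $\rho$'s with a $\tau'$-avoiding suffix of that length is not $\approx|S_{n-1-\ell}(\tau)|\cdot|S_\ell(\tau')|$: the \emph{values} occupying the suffix form an arbitrary $\ell$-subset of $[n-1]$, and the resulting $\binom{n-1}{\ell}$ factor overwhelms any saving from $L(\tau')\le L$.

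The paper runs the argument in the opposite direction and sidesteps all of this. Rather than peeling off the top value, it \emph{adds} new left-to-right maxima: given $\sigma\in S_n(\tau;\LR_n<\delta n)$ and heights $h_1<\cdots<h_r$ chosen one from each of $r=\lfloor n/M\rfloor$ disjoint height-intervals of width~$M$, the insertion operation $\mathcal{I}$ of Lemma~\ref{lem.12} produces a member of $S_{n+r}(\tau)$ (this is exactly where $\tau_1=k$ is used). The resulting map $\Psi:S_n(\tau;\LR_n<\delta n)\times[M]^r\to S_{n+r}(\tau)$ has each fibre bounded by $\prod_i b_i\le\bigl((\delta n+r)/r\bigr)^r$ via AM--GM, where $b_i$ is the number of left-to-right maxima of the image lying in the $i$th shifted interval. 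Hence $|S_n(\tau;\LR_n<\delta n)|\,M^r\le L^{\,n+r}\bigl((\delta n+r)/r\bigr)^r$, and choosing $M$ large enough that $\delta L+L/M<1$ yields the strict inequality directly, with the threshold $\delta<1/L$ falling out of the arithmetic. No induction and no moment generating function are needed.
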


Theorem~\ref{MainTh2} is an immediate consequence of Theorem \ref{GMainTh2}, which
is a stronger (and more technical) result.   We will also need Theorem \ref{GMainTh2} for the 
proof of Theorem \ref{regionthm}(b).  

For $\tau\in S_k$, $\delta>0$, and an interval $I$, define
\begin{align*}
S_n(\tau; &\, \LR_n[I]<\delta n)\\
&=\{\sigma \in S_n(\tau):\hbox{$\sigma$ has fewer than $\delta n$ left-to-right maxima $\sigma_i$ 
   such that $\sigma_i\in I$
} \}.
\end{align*}
\begin{theorem} 
\label{GMainTh2}
Let $\tau=k\tau_2\cdots \tau_k \in S_k$. 
For $0\leq \alpha_1<\alpha_2\leq1,$ let $I_n=[a_n,b_n]$ where $a_n/n\to\alpha_1$ and $b_n/n\to\alpha_2$. 
For every $0<\delta< \frac{\alpha_2-\alpha_1}{L(\tau)}$, the following strict inequality holds:
\[\limsup_{n\to \infty}|S_n(\tau;\LR_n[I_n]<\delta n)|^{1/n}<L(\tau).\]
\end{theorem}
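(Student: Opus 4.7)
The plan is to prove $|B_n|^{1/n} < L(\tau)$ where $B_n := S_n(\tau;\LR_n[I_n] < \delta n)$, via an insertion-based injection from $B_n$ into $S_{n+m}(\tau)$ for some $m$ linear in $n$, combined with the Stanley--Wilf upper bound $|S_{n+m}(\tau)| \le L(\tau)^{n+m}(1+o(1))$.

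The main tool is an insertion lemma based on $\tau_1 = k$: if $\sigma \in S_n(\tau)$ and $\sigma_i$ is a left-to-right maximum of $\sigma$ with value $v$, then inserting a new element at position $i+1$ with value $v+1$ (after shifting all values of $\sigma$ exceeding $v$ upward by one) yields $\sigma^+ \in S_{n+1}(\tau)$. The proof is direct: any $\tau$-pattern in $\sigma^+$ that uses the inserted element must use it as $\tau_1 = k$, because at positions $\le i$ the values of $\sigma^+$ are the unchanged $\sigma_j \le v < v+1$ and cannot play the role of something larger than $v+1$; but then the remaining $k-1$ selected elements lie at positions $>i+1$ with values $\le v$ and form $\hat\tau := \tau_2\cdots\tau_k$ among the values of $\sigma$ below $v$ at positions after $i$, contradicting the fact, valid since $\sigma \in S_n(\tau)$ and $\sigma_i$ is a left-to-right maximum, that these values already avoid $\hat\tau$.

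Given $\sigma \in B_n$, at least $(\alpha_2 - \alpha_1 - \delta)n - o(n)$ values in $I_n$ fail to be left-to-right maxima; each such non-max value has a nearest-larger ``witness'' that is a left-to-right maximum of $\sigma$. Since witnesses either lie in $I_n$ (fewer than $\delta n$ possible) or have value exceeding $b_n$ (at most $(1 - \alpha_2)n + o(n)$ possible), by pigeonhole some single left-to-right maximum $\sigma_{i^*}$ of value $v^*$ serves as witness for at least $\rho n$ non-max values in $I_n$, where $\rho > 0$ depends quantitatively on the slack $(\alpha_2 - \alpha_1)/L(\tau) - \delta$. Iterating the insertion lemma $m := \lfloor \gamma n\rfloor$ times at $i^*$ inserts a block of $m$ new elements just above $v^*$ in value; the number of valid block-insertions is at least the number of $\hat\tau$-avoiding interleavings of the $m$ fresh elements with the $\rho n$ dominated existing ones. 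For sufficiently small $\gamma > 0$, the Stanley--Wilf bound applied to $\hat\tau$ together with this combinatorial freedom yields a lower bound of the form $L(\tau)^m \cdot \beta^n$ for some $\beta > 1$ extractable from the slack $(\alpha_2 - \alpha_1)/L(\tau) - \delta$.

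Summing extensions over $\sigma \in B_n$ and verifying that the induced map $B_n \times (\text{extensions}) \to S_{n+m}(\tau)$ has only polynomial multiplicity (since the inserted values are recoverable from $\sigma^+$ via their proximity to $v^*$ in both position and value, together with the LR-max structure), one obtains $|B_n| \cdot L(\tau)^m \beta^n \le L(\tau)^{n+m}\cdot \text{poly}(n)$, hence $\limsup|B_n|^{1/n} \le L(\tau)/\beta < L(\tau)$. The central obstacle is this exponential lower bound $L(\tau)^m \beta^n$ on the number of extensions: matching the correct $L(\tau)^m$ factor against the Stanley--Wilf upper bound requires delicate use of the Regev-type estimate for $|S_m(\hat\tau)|$, and the multiplicative gain $\beta^n$ must be extracted tightly enough from the threshold $\delta < (\alpha_2 - \alpha_1)/L(\tau)$ to produce a strict inequality rather than the trivial bound $|B_n|^{1/n} \le L(\tau)$.
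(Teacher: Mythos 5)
There is a genuine gap, and it sits exactly where your proposal needs to do its work. First, the pigeonhole step fails quantitatively. You have at least $(\alpha_2-\alpha_1-\delta)n-o(n)$ non-maximal values in $I_n$ to be witnessed, but the pool of candidate witnesses (left-to-right maxima with height in $I_n$ or above $b_n$) has size up to $\delta n+(1-\alpha_2)n+o(n)$, which is $\Theta(n)$. Pigeonhole therefore only guarantees a single left-to-right maximum witnessing a \emph{constant} number of non-maxima, namely about $(\alpha_2-\alpha_1-\delta)/(1-\alpha_2+\delta)$, not $\rho n$ of them. With only $O(1)$ dominated elements available, the interleaving freedom you plan to exploit evaporates. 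Second, even granting $\rho n$ dominated elements, the lower bound $L(\tau)^m\beta^n$ on the number of valid block-insertions is asserted, not proved, and you yourself flag it as ``the central obstacle.'' The mechanism you propose --- counting $\hat\tau$-avoiding interleavings with $\hat\tau=\tau_2\cdots\tau_k$ and invoking a Regev-type estimate --- cannot deliver it: Regev's asymptotics apply only to monotone patterns, and in general $L(\hat\tau)$ is much smaller than $L(\tau)$ (e.g.\ for $\tau=4231$ one has $L(231)=4$ while $L(4231)>10$), so interleaving counts of order $L(\hat\tau)^{\Theta(m)}$ fall exponentially short of the $L(\tau)^m$ you need to cancel against the Stanley--Wilf bound $L(\tau)^{n+m}$. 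The claim of only polynomial multiplicity for the induced map is likewise unsupported.

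For contrast, the paper's proof avoids both difficulties by never asking for $L(\tau)^m$ extensions per permutation. It fixes a large integer $M$, splits $I_n$ into $r\approx(\alpha_2-\alpha_1)n/M$ disjoint height windows of width $M$, and inserts one new left-to-right maximum into each window (using the insertion operation $\mathcal{I}$ of \cite{AtM}, which preserves $\tau$-avoidance when $\tau_1=k$ --- your insertion lemma is a correct variant of this). Each $\sigma$ thus produces $M^r$ images in $S_{n+r}(\tau)$, while the number of preimages of any image is at most $\prod_i b_i$, where $b_i$ counts the left-to-right maxima of the image in the $i$-th shifted window; since $\sum_i b_i\leq\delta n+r$ by the hypothesis $\LR_n[I_n]<\delta n$, the AM--GM inequality bounds this by $((\delta n+r)/r)^r$. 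The net gain per inserted element is then about $M/(\delta M/(\alpha_2-\alpha_1)+1)$, which exceeds $L(\tau)$ precisely because $\delta<(\alpha_2-\alpha_1)/L(\tau)$, and that is what produces the strict inequality. The moral is that the hypothesis on $\LR_n[I_n]$ must be used to bound the \emph{multiplicity} of the insertion map (few left-to-right maxima in the image means few candidates for the inserted ones), rather than to locate a large dominated set; your proposal does not use the hypothesis in this way, and without it the argument does not close.
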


Intuitively, this result says that for a typical member of $S_n(\tau)$, the
left-to-right maxima fill $[1,n]$ with a positive density throughout $[1,n]$.

We shall require an \textit{insertion operation} $\mathcal{I}$ which was introduced in \cite{AtM}. 
Suppose $\sigma \in S_n$ and $h \in [n].$ Let $J=\min\{j:\sigma_j\geq h\}.$ 
Define the permutation $\theta \in S_{n+1}$ as follows:
\begin{equation*}
\theta_i= \begin{cases}
\sigma_i &\text{ if } i<J\\
h &\text{ if } i=J\\
\sigma_{i-1}&\text{ if } i\geq J+1 \text{ and } \sigma_{i-1}<h\\
\sigma_{i-1}+1&\text{ if } i\geq J+1 \text{ and } \sigma_{i-1}\geq h\\
\end{cases}
\end{equation*}
We denote $\theta$ by $\mathcal{I}(\sigma;h).$   Think of $\mathcal{I}(\sigma;h)$ as
inserting a new left-to-right maximum in $\sigma$ at height $h$, while preserving all
the other left-to-right maxima of $\sigma$ (perhaps shifting them slightly).

The proof of Theorem~\ref{GMainTh2} is similar to the proof of Proposition 6.2 of \cite{AtM}.
The main idea is that repeated use of $\mathcal{I}$ can transform a permutation $\sigma$ 
in $S_n(\tau)$ into many other permutations  $\sigma'$
in $S_{n+r}(\tau)$ with $r$ more left-to-right maxima.  By insisting that $\sigma$ 
started with few left-to-right maxima, we significantly limit the number of choices of
$\sigma$ that could give rise to a particular $\sigma'$.  We then estimate how much smaller
the number of preimages  $\sigma$  is than the number of images $\sigma'$, which is
at most $L(\tau)^{n+r}$.

The following lemma says that if $\tau_1=k$, then $\mathcal{I}$ preserves $\tau$-avoidance.
For the proof, see the proof of Lemma $6.1$ in \cite{AtM}.
\begin{lemma}[\cite{AtM}]
    \label{lem.12}
Let $\tau=k\tau_2\cdots \tau_k \in S_k$. 
Assume $1\leq h_1<h_2<\cdots<h_r\leq n.$ Let $\sigma^{(r)}=\sigma\in S_n(\tau)$ and let 
$\sigma^{(l-1)}=\mathcal{I}(\sigma^{(l)};h_l)$ for $l=r,r{-}1,\cdots,2,1.$ 
We denote $\sigma^{(0)}$ by $\mathcal{I}(\sigma;h_1,h_2,\cdots,h_r).$ Then we have 
\[\sigma^{(0)}\in S_{n+r}(\tau).\] 
\end{lemma}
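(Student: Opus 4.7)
The plan is to induct on $r$. Using $\mathcal{I}(\sigma;h_1,\ldots,h_r)=\mathcal{I}\bigl(\mathcal{I}(\sigma;h_2,\ldots,h_r);h_1\bigr)$, the inductive step reduces to the single-insertion case: if $\sigma\in S_n(\tau)$ and $h\in[n]$, then $\theta:=\mathcal{I}(\sigma;h)$ lies in $S_{n+1}(\tau)$. The increasing assumption on $h_1<\cdots<h_r$ is not needed for this reduction, only for bookkeeping of the left-to-right maxima.

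For the single insertion, let $J=\min\{j:\sigma_j\geq h\}$, so $\theta_J=h$. Two observations drive the argument: (i) every position $i<J$ has $\theta_i=\sigma_i<h$ by definition of $J$; and (ii) when $J\leq n$, $\sigma_J\geq h$ forces $\theta_{J+1}=\sigma_J+1>h$. I would then suppose for contradiction that $\theta$ contains $\tau$ at positions $i_1<\cdots<i_k$; since $\tau_1=k$ is the largest letter of $\tau$, $\theta_{i_1}$ must strictly exceed every other $\theta_{i_m}$. The proof splits according to where $J$ lies.

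If $J\notin\{i_1,\ldots,i_k\}$, deleting position $J$ and subtracting $1$ from every value $>h$ recovers $\sigma$ by an order-preserving bijection, so the same occurrence gives a $\tau$-pattern in $\sigma$. If $J=i_j$ for some $j\geq 2$, then (i) gives $\theta_{i_1}<h=\theta_{i_j}$, contradicting $\theta_{i_1}>\theta_{i_j}$. The interesting case is $J=i_1$: all other $\theta_{i_m}$ (for $m\geq 2$) satisfy $\theta_{i_m}<h$, so by (ii) we have $i_m>J+1$ and $\theta_{i_m}=\sigma_{i_m-1}<h$. I would then claim that the subsequence $\sigma_J,\sigma_{i_2-1},\ldots,\sigma_{i_k-1}$ of $\sigma$ is a $\tau$-pattern: its positions are strictly increasing, $\sigma_J\geq h$ strictly exceeds the subsequent values (all $<h$), and the tail is order-isomorphic to $\tau_2\cdots\tau_k$; since $\tau_1=k$, this is a forbidden $\tau$-pattern in $\sigma$, completing the contradiction.

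The only subtle step, and the only place the hypothesis $\tau_1=k$ is used, is this last case, where the inserted value $\theta_J=h$ itself plays the role of $\tau_1$. There I must locate a substitute inside $\sigma$; the choice $\sigma_J$ works precisely because $\tau_1$ is the maximum of $\tau$, so any value $\geq h$ immediately preceding the remainder of the pattern suffices. Without $\tau_1=k$, the substitute would need a more delicate relative height among the subsequent entries, and the lemma would fail.
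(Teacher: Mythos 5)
Your proof is correct. The paper does not reproduce a proof of this lemma (it defers to Lemma 6.1 of \cite{AtM}), but your argument --- reduce to a single insertion, dispose of the cases $J\notin\{i_1,\dots,i_k\}$ and $J=i_j$ with $j\geq 2$ directly, and in the only nontrivial case $J=i_1$ replace the inserted value $\theta_J=h$ by $\sigma_J\geq h$ to recover a $\tau$-occurrence in $\sigma$ --- is exactly the replacement idea used there, and is the same device the paper itself employs in the proof of Theorem~\ref{thm2413} (replacing $n+1$ by the left-to-right maximum $\sigma_i$).
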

Observe also that $h_i+i-1$ is a left-to-right maximum of $\sigma^{(0)}$ for 
each $i\in [r]$.

\begin{proof}[Proof of Theorem \ref{GMainTh2}]

Assume $0<\delta<\frac{\alpha_2-\alpha_1}{L(\tau)}$ and choose a positive integer $M$ so that 
$\frac{L(\tau)}{M}<1-\frac{\delta L(\tau)}{\alpha_2-\alpha_1}.$ 
Let $r\equiv r_n:=\floor{\frac{b_n-a_n-2}{M}}$.

 For fixed $n$, define the \textit{intervals of heights} in $I_n$ as 
${\cal J}_i \,:=\,(\ceil{a_n}+(i-1)M,\ceil{a_n}+iM]$ for each $i\in [r].$ Then ${\cal J}_1,\cdots, {\cal J}_r$ are disjoint subintervals of $I_n.$

In the following, we shall use the notation of Lemma \ref{lem.12}.
Define the function $\Psi\equiv \Psi_n$ as
\[\Psi : S_n(\tau;\LR_n[I_n]<\delta n)\times [M]^r \to S_{n+r}(\tau)\]
such that $\Psi(\sigma,(\hat h_1,\hat h_2,\cdots,\hat h_r))=\mathcal{I}(\sigma;h_1,h_2,\cdots,h_r),$
where $h_i=\ceil{a_n}+\hat h_i +(i-1)M$ for $i\in [r]$ 
(so that $h_i\in {\cal J}_i$).
Corresponding to the intervals of heights ${\cal J}_i$, we define  the shifted
intervals of heights as follows:
\[   {\cal J}^{\Psi}_i\,:=\,{\cal J}_i+i-1  \;=\;(\ceil{a_n}+(i-1)(M+1),\ceil{a_n}+i(M+1)-1],
  \quad i=1,\cdots, r.\]
  Then ${\cal J}^{\Psi}_1.\ldots,{\cal J}^{\Psi}_r$ are disjoint subintervals of $(a_n,b_n+r)$.
  
Given $\sigma' \in$ Image $ \Psi$, we want to find an upper bound on the number of $(\bar \sigma,(\bar h_1,\bar h_2,\cdots,\bar h_r))$ in the domain of $\Psi$ such that 
\begin{equation}
   \label{eq.psi00}
   \Psi(\bar \sigma,(\bar h_1,\bar h_2,\cdots,\bar h_r))=\sigma' .
\end{equation}
For $i\in [r]$, let $b_i$ be the number of left-to-right maxima of $\sigma'$ in ${\cal J}^{\Psi}_i.$ 
Observe that if Equation (\ref{eq.psi00}) holds, then $\ceil{a_n}+\bar{h}_i+(i-1)M+i-1$ is a 
left-to-right maximum of $\sigma'$ in ${\cal J}^{\Psi}_i$.

Also note that there are at most $\delta n+r$ left-to-right 
maxima of $\sigma'$ in $(a_n,b_n+r)$. It follows that
\[
|\Psi^{-1}(\sigma')|\;\leq\; \displaystyle \prod_{i=1}^{r}b_i \;\leq\; 
 \left( \frac{\sum_{i=1}^rb_i}{r}\right)^r \;\leq\; \left(\frac{\delta n+r}{r}\right)^r.
\] 
Therefore
\[|S_n(\tau;\LR_n[I_n]<\delta n)|M^r\;=\sum_{\sigma'\in \text{Image } \Psi}|\Psi^{-1}(\sigma')|
 \, \; \leq \; \, |S_{n+r}(\tau)| \left(\frac{\delta n+r}{r}\right)^r,\]
and hence 
\[|S_n(\tau;\LR_n[I_n]<\delta n)| \, \, \leq \, \, L(\tau)^{n+r}\left(\frac{\delta n+r}{M r}\right)^r.\]
Since $\dfrac{r_n}{n} \to \dfrac{\alpha_2-\alpha_1}{M}$ as $n\to \infty$, it follows that 
\[\limsup_{n\to \infty}|S_n(\tau;\LR_n[I_n]<\delta n)|^{1/n} \;\leq\;  
 L(\tau)\left(\frac{\delta L(\tau)}{\alpha_2-\alpha_1}+\frac{L(\tau)}{M}\right)^{(\alpha_2-\alpha_1)/M}
   \;<\;  L(\tau).\]
Then Theorem~\ref{GMainTh2} follows.
\end{proof}

Theorem \ref{MainTh2} and Equation (\ref{eq.LISLR})  imply  the following.

\begin{corollary} 
\label{liscor}
Consider $S_n(\tau)$ with $\tau=k\tau_2\cdots \tau_k \in S_k.$ Then
\[\liminf_{n\to \infty}\frac{\E^{\tau}_n(\LIS_n)}{n}\geq \frac{1}{L(\tau)}.\]
\end{corollary}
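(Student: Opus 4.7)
The plan is to derive the corollary as a direct consequence of Theorem \ref{MainTh2}, Equation (\ref{eq.LISLR}), and the Stanley--Wilf limit. The intuition is that a typical $\tau$-avoiding permutation (with $\tau_1=k$) has many left-to-right maxima, and each such chain of maxima is automatically an increasing subsequence, so $\LIS_n$ cannot be too small.

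First I would fix an arbitrary $\delta$ with $0<\delta<1/L(\tau)$. Applying Theorem \ref{MainTh2}, there exists $\rho<L(\tau)$ such that $|S_n(\tau;\LR_n<\delta n)|\leq \rho^n$ for all sufficiently large $n$. Combined with $|S_n(\tau)|^{1/n}\to L(\tau)$ (the Stanley--Wilf limit), this gives
\begin{equation*}
\bP_n^\tau(\LR_n<\delta n)\;=\;\frac{|S_n(\tau;\LR_n<\delta n)|}{|S_n(\tau)|}\;\longrightarrow\; 0
\end{equation*}
exponentially fast as $n\to\infty$.

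Next I would exploit Equation (\ref{eq.LISLR}), namely $\LIS_n(\sigma)\geq \LR_n(\sigma)$, to conclude that $\{\LR_n\geq \delta n\}\subseteq \{\LIS_n\geq \delta n\}$. A one-line Markov-style lower bound then gives
\begin{equation*}
\E_n^\tau(\LIS_n)\;\geq\; \delta n\cdot \bP_n^\tau(\LIS_n\geq \delta n)\;\geq\; \delta n\bigl(1-\bP_n^\tau(\LR_n<\delta n)\bigr).
\end{equation*}
Dividing by $n$ and taking $\liminf$ on both sides, the probability term vanishes and I obtain $\liminf_{n\to\infty}\E_n^\tau(\LIS_n)/n\geq \delta$.

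Since $\delta$ was arbitrary in $(0,1/L(\tau))$, letting $\delta\uparrow 1/L(\tau)$ yields the claimed bound. There is no real obstacle here; the entire content has been absorbed into Theorem \ref{MainTh2}, and the only step worth double-checking is that the exponential smallness of $\bP_n^\tau(\LR_n<\delta n)$ really follows from comparing the two exponential rates $\rho<L(\tau)$, which is immediate from taking $n$-th roots.
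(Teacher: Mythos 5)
Your proof is correct and follows essentially the same route as the paper: both arguments combine the bound $\LIS_n\geq\LR_n$ from Equation (\ref{eq.LISLR}) with Theorem \ref{MainTh2} and the Stanley--Wilf limit to show that $\bP_n^{\tau}(\LR_n<\delta n)\to 0$, then conclude via the truncated expectation bound $\E_n^{\tau}(\LIS_n)\geq\delta n\,\bP_n^{\tau}(\LR_n\geq\delta n)$ and let $\delta\uparrow 1/L(\tau)$.
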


\begin{proof}[Proof of Corollary~\ref{liscor}]
Recall from Equation (\ref{eq.LISLR}) 
 that $\LIS_n(\sigma)\geq \LR_n(\sigma)$ for every $\sigma\in S_n$. 
 Therefore for any $0<\delta<\frac{1}{L(\tau)}$,
\[\E^{\tau}_n(\LIS_n)\;\geq \;\E^{\tau}_n(\LR_n)\;\geq \; \E^{\tau}_n(\LR_n1_{\LR_n\geq \delta n})\; \geq\; 
    \delta n \, \frac{|S_n(\tau;\LR_n\geq \delta n)|}{|S_n(\tau)|} \,.\] 
Then the result follows from Theorem~\ref{MainTh2}.
\end{proof}

The next result provides a sharp contrast with Theorem \ref{MainTh2} in the case 
$\tau=k\tau_2\cdots\tau_k$. Thus it is of some independent interest, even though it tells
us nothing about longest increasing subsequences. 

\begin{theorem} 
\label{thm2413}
Assume $\tau \in S_k$ and satisfies at least one of the following conditions:
\begin{verse} 
(\textit{i}) $\tau_1<\tau_2$, or 
\\
 (\textit{ii})   $k$ occurs to the right of $k{-}1$ in $\tau$.
\end{verse} 
Then 
\begin{equation}
   \label{eq.liminfE}
  \liminf_{n\to \infty}\E^{\tau}_n(\LR_n)\leq L(\tau).
\end{equation}
If also $k\leq 5$ or if $\tau_1=1$ or if $\tau_k=k$, then we can replace the above ``lim inf'' by 
``lim sup.''
\end{theorem}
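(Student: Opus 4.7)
The plan is to obtain the bound
\[
\E^\tau_n(\LR_n) \;\le\; \frac{|S_{n+1}(\tau)|}{|S_n(\tau)|}
\]
by constructing an injection
\[
\Phi\colon \bigl\{(\sigma, i) : \sigma \in S_n(\tau),\ \sigma_i\text{ is a left-to-right max of }\sigma\bigr\} \;\longrightarrow\; S_{n+1}(\tau).
\]
Summing over pairs gives $\sum_{\sigma\in S_n(\tau)} \LR_n(\sigma) \le |S_{n+1}(\tau)|$, and dividing by $|S_n(\tau)|$ yields the displayed inequality. Since $|S_n(\tau)|^{1/n}\to L(\tau)$ by Marcus--Tardos, an elementary argument---were the ratio eventually above $L(\tau)+\varepsilon$, the sequence $|S_n(\tau)|$ would grow faster than $(L(\tau)+\varepsilon)^n$---gives $\liminf_{n\to\infty}|S_{n+1}(\tau)|/|S_n(\tau)| \le L(\tau)$, yielding \eqref{eq.liminfE}.

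The natural candidate for $\Phi$ is the following: given $(\sigma,i)$ with $\sigma_i=h$ a LR maximum, insert the value $h+1$ at position $i+1$ (shifting up by one every existing value $\ge h+1$). The resulting $\sigma'$ has $\sigma'_i=h$ and $\sigma'_{i+1}=h+1$, an adjacent ``ascent of one'' whose two entries are both LR maxima of $\sigma'$. The inverse operation selects the \emph{rightmost} such adjacent pair of consecutive LR maxima in $\sigma'$ whose values differ by exactly one, deletes the larger entry, and shifts values down; the ``rightmost'' convention avoids collision with any pre-existing such pair in $\sigma$, making $\Phi$ injective.

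The principal obstacle is to show $\Phi(\sigma,i)\in S_{n+1}(\tau)$. Suppose $\sigma'$ contained an occurrence of $\tau$ at positions $j_1<\cdots<j_k$ with values $v_1,\ldots,v_k$. If $\{i,i+1\}\cap\{j_s\}=\emptyset$, the occurrence descends to $\sigma$, a contradiction. If exactly one of $i,i+1$ lies in $\{j_s\}$, the LR-max property of $\sigma_i$---every entry at a position $<i$ is less than $h$---lets one swap the used position to $i$ (with value $h$) without disturbing the relative order of the subsequence, again contradicting $\sigma\in S_n(\tau)$. The delicate case is when both $i$ and $i+1$ appear, say as $j_r$ and $j_{r+1}$; a rank calculation forces $\tau_{r+1}=\tau_r+1$, and simply deleting position $i+1$ yields only a length-$(k-1)$ subsequence in $\sigma$, which does not directly contradict $\sigma\in S_n(\tau)$. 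Hypotheses (i) and (ii) enter here: under (i) $\tau_1<\tau_2$, the constraint $\tau_{r+1}=\tau_r+1$ (forcing $r\ge 1$) combined with $\tau_1<\tau_2$ lets one locate a value $\sigma_j>h$ with $j>i$ in $\sigma$ to substitute for the inserted $h+1$, promoting the length-$(k-1)$ occurrence into a length-$k$ occurrence of $\tau$ inside $\sigma$; under (ii), the fact that $k$ lies to the right of $k-1$ in $\tau$ plays an analogous role by restricting which roles the pair $(h,h+1)$ can fulfill in the pattern and enabling a similar substitution. Carrying out this substitution in detail, and verifying that it covers all subcases under either hypothesis, is the main technical step.

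For the limsup refinement under $k\le 5$, $\tau_1=1$, or $\tau_k=k$, one needs convergence of $|S_{n+1}(\tau)|/|S_n(\tau)|$ to $L(\tau)$, not merely $\liminf\le L(\tau)$. When $\tau_k=k$, the pattern $\tau$ is skew-indecomposable (any skew sum $\beta\ominus\alpha$ would place the maximum of $\tau$ in the top-left block $\beta$, contradicting $\tau_k=k$), so $\bigcup_n S_n(\tau)$ is closed under skew sum, giving the supermultiplicative inequality $|S_{m+n}(\tau)|\ge |S_m(\tau)||S_n(\tau)|$; combined with pattern-specific control on the asymptotics of $|S_n(\tau)|$, this forces the ratio to converge. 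A parallel structural argument handles $\tau_1=1$. For $k\le 5$, the sequences $|S_n(\tau)|$ for every $\tau\in S_k$ are known explicitly or with sufficiently precise asymptotics, so ratio convergence can be verified pattern by pattern.
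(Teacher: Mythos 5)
Your overall skeleton matches the paper's: bound $\E^\tau_n(\LR_n)$ by the ratio $|S_{n+1}(\tau)|/|S_n(\tau)|$, note that the liminf of this ratio is at most $L(\tau)$ because $L(\tau)=\sup_n|S_n(\tau)|^{1/n}$, and invoke ratio-convergence results for the limsup refinement. However, your key construction has a genuine gap: the map $\Phi$ that inserts the value $h+1$ immediately after a left-to-right maximum of height $h$ is \emph{not injective}, and no ``rightmost pair'' (or any other) convention for the inverse can repair this, because distinct pairs genuinely have the same image. For instance, for the identity $\sigma=12\cdots n$ every position $i\in[n]$ is a left-to-right maximum and $\Phi(\sigma,i)=12\cdots(n+1)$ for every $i$; this single image has $n$ preimages, the multiplicity is unbounded, and the inequality $\sum_\sigma\LR_n(\sigma)\le|S_{n+1}(\tau)|$ does not follow from your map. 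Moreover, you leave the $\tau$-avoidance of $\Phi(\sigma,i)$ unproved in exactly the subcase (both inserted-adjacent positions participating in an occurrence) where hypotheses (\textit{i})/(\textit{ii}) must do the work, so the step where the hypotheses actually enter is missing.

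The paper sidesteps both problems with a different insertion: given a left-to-right maximum at position $i$, it inserts the new \emph{global} maximum $n+1$ at position $i$ (pushing $\sigma_i$ one step right). Injectivity is then automatic in aggregate: every $\rho\in S_{n+1}(\tau)$ arises from exactly one pair $(\sigma,i)$ by deleting the maximal entry, which yields the exact identity $\sum_{\sigma}\W_n(\sigma)=|S_{n+1}(\tau)|$, where $\W_n(\sigma)$ counts the positions at which $n+1$ can be inserted without creating $\tau$; the pointwise bound $\LR_n\le\W_n$ then gives the ratio bound. Avoidance is clean under condition (\textit{ii}): the inserted $n+1$ must play the role of $k$ in any occurrence of $\tau$, so the entry playing $k-1$ sits at a position $j<i$ and hence has value below $\sigma_i$; replacing $n+1$ by $\sigma_i$ exhibits $\tau$ inside $\sigma$ itself, a contradiction. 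Condition (\textit{i}) is then reduced to (\textit{ii}) by the reflection $\sigma\mapsto(\sigma^{-1})^{rc}$, which preserves $\LR_n$. Finally, your sketch of ratio convergence for the limsup statement (supermultiplicativity plus ``pattern-specific control'') is not a proof --- Fekete-type arguments give convergence of $|S_n(\tau)|^{1/n}$, not of the consecutive ratio --- whereas the paper simply cites the ratio limit theorem of Atapour and Madras (Theorem \ref{AtMad}).
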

We shall use the following result in the proof of Theorem~\ref{thm2413}. 
For its proof, see Theorem 6.4, Remark 2, and Theorem 7.1 of \cite{AtM}.  Presumably the
result (\ref{ratiolim}) holds for every $\tau$, but the proof remains elusive.

\begin{theorem}[\cite{AtM}]
\label{AtMad} For every $\tau$ in $S_k$ with $k\leq 5$
or when $\tau_1$ (or $\tau_k$) equals 1 or $k$, we have 
\begin{eqnarray}
\label{ratiolim}
\lim_{n\to \infty}\frac{|S_{n+1}(\tau)|}{|S_n(\tau)|}=L(\tau) \,.
\end{eqnarray}
\end{theorem}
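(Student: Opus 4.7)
The plan is to prove the enumerative inequality
\[
\sum_{\sigma\in S_n(\tau)}\LR_n(\sigma)\ \le\ |S_{n+1}(\tau)|,
\]
which upon dividing by $|S_n(\tau)|$ gives $\E^{\tau}_n(\LR_n)\le |S_{n+1}(\tau)|/|S_n(\tau)|$. Since $|S_n(\tau)|^{1/n}\to L(\tau)$ by Stanley--Wilf, the standard inequality $\liminf_n a_{n+1}/a_n\le\lim_n a_n^{1/n}$ (applied to $a_n=|S_n(\tau)|$) yields (\ref{eq.liminfE}). Under the additional hypotheses of Theorem~\ref{AtMad}, the ratio $|S_{n+1}(\tau)|/|S_n(\tau)|$ converges to $L(\tau)$, and the same bound then holds with ``$\liminf$'' upgraded to ``$\limsup$''.

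To prove the enumerative inequality I would construct an injection
\[
\Phi\,:\,\{(\sigma,j):\sigma\in S_n(\tau),\ 1\le j\le\LR_n(\sigma)\}\ \hookrightarrow\ S_{n+1}(\tau).
\]
Let $p_j$ be the position and $v_j=\sigma_{p_j}$ the value of the $j$-th left-to-right maximum of $\sigma$. Under hypothesis~(i), define $\phi=\Phi(\sigma,j)$ by \emph{prepending} $v_j+1$ to $\sigma$, shifting every value $\ge v_j+1$ up by~$1$. Under hypothesis~(ii), define $\phi=\Phi(\sigma,j)$ by inserting $v_j+1$ immediately after position~$p_j$ (with the same upward shift), creating the adjacent ascent $(v_j,v_j+1)$ at positions $p_j,p_j+1$ of $\phi$, with $v_j+1$ serving as a new left-to-right maximum. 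In each case $\Phi$ is injective: the inserted value is located canonically in $\phi$ (at position~$1$ in case~(i); at the leftmost adjacent ascent $(a,a+1)$ whose second entry $a+1$ is a left-to-right maximum in case~(ii)), and undoing the value-shift recovers $\sigma$ and hence $j$.

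The core of the proof is to show $\phi\in S_{n+1}(\tau)$. Assume toward a contradiction that $\phi$ contained $\tau$ at positions $i_1<\cdots<i_k$, and analyse which of the ``inserted'' positions appear in $\{i_s\}$. If none do, the pattern transfers verbatim to $\sigma$, contradicting $\sigma\in S_n(\tau)$. Under hypothesis~(i) the only remaining case is $i_1=1$; the constraint $\tau_1<\tau_2$ then forces $\phi_{i_2}>v_j+1$, hence $i_2\ge p_j+2$, and the $\sigma$-subsequence at positions $p_j,i_2-1,\ldots,i_k-1$ realizes $\tau$ by a direct rank matching, again a contradiction. Under hypothesis~(ii) one handles separately the subcases where exactly one of $p_j,p_j+1$ lies in $\{i_s\}$ (replacing the used inserted value by its genuine occurrence in $\sigma$) and where both do; in the latter, adjacent consecutive values force $\tau_{\ell+1}=\tau_\ell+1$ for some~$\ell$, and hypothesis~(ii) pins this down to $\tau_\ell=k-1$, $\tau_{\ell+1}=k$ (any other choice would place $k$ before $k-1$ in~$\tau$), after which one augments the $\sigma$-subsequence with the genuine occurrence $\sigma_q=v_j+1$ at some $q>p_j$ and verifies that the pattern realized in $\sigma$ is exactly $\tau$.

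The main obstacle is this last subcase under~(ii): one must place $q$ in the correct slot of the sorted $\sigma$-subsequence so that $\sigma_q=v_j+1$ plays the role $\tau_{\ell+1}=k$ rather than some later role. The equality $q=i_{\ell+2}-1$ is ruled out because $\phi_{q+1}=v_j+2$ would then be a subsequence entry of rank exceeding $k=\tau_{\ell+1}$, an impossibility; the strict inequality $q<i_{\ell+2}-1$ follows from the fact that $\phi_{i_s}<v_j+1$ for all $s>\ell+1$ (since $\tau_{\ell+1}=k$ is maximal in the pattern), which together with the structure of the value-shift forces $q$ to precede all of $i_{\ell+2}-1,\ldots,i_k-1$ in $\sigma$. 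This rank-and-position bookkeeping is where hypothesis~(ii) plays its essential role, in contrast to the much cleaner ``prepend'' insertion that suffices under~(i).
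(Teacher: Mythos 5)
Your proposal does not prove the stated theorem. Theorem \ref{AtMad} asserts the ratio limit $\lim_{n\to\infty}|S_{n+1}(\tau)|/|S_n(\tau)|=L(\tau)$ for patterns with $k\le 5$ or with $\tau_1$ (or $\tau_k$) equal to $1$ or $k$; in the paper it is an imported result, cited to Theorem 6.4, Remark 2, and Theorem 7.1 of \cite{AtM}, and no proof is given here. What you have written is instead a proof of Theorem \ref{thm2413}: your hypotheses (i) $\tau_1<\tau_2$ and (ii) ``$k$ occurs to the right of $k-1$'' are the hypotheses of Theorem \ref{thm2413}, not of Theorem \ref{AtMad}; your enumerative inequality $\sum_{\sigma\in S_n(\tau)}\LR_n(\sigma)\le|S_{n+1}(\tau)|$ is a mild variant of the paper's identity $\sum_{\sigma}\W_n(\sigma)=|S_{n+1}(\tau)|$ combined with $\LR_n\le\W_n$; and your conclusion is the bound (\ref{eq.liminfE}) on $\E^{\tau}_n(\LR_n)$, which says nothing about whether the ratio $|S_{n+1}(\tau)|/|S_n(\tau)|$ converges.

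Moreover, the argument is circular with respect to the actual target: you write that ``under the additional hypotheses of Theorem~\ref{AtMad}, the ratio $|S_{n+1}(\tau)|/|S_n(\tau)|$ converges to $L(\tau)$'' --- that is exactly the statement to be proved, invoked as a known fact to upgrade $\liminf$ to $\limsup$. Nothing in your insertion construction addresses the genuine difficulty, namely that the ratio sequence converges at all. The easy half, $\liminf_n|S_{n+1}(\tau)|/|S_n(\tau)|\le L(\tau)$, already follows from supermultiplicativity and $L(\tau)=\sup_n|S_n(\tau)|^{1/n}$ (Equation (\ref{eq.Ltau})); the content of Theorem \ref{AtMad} is the matching lower bound and the non-oscillation of the ratio, which \cite{AtM} obtains by a separate analysis specific to the listed classes of $\tau$ (note also that those classes are characterized by $k\le5$ or an extremal first/last entry, conditions your case analysis never uses). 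As a proof of Theorem \ref{thm2413} your injection is essentially the paper's own argument; as a proof of Theorem \ref{AtMad} it is not salvageable without importing the very result from \cite{AtM}.
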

\begin{proof}[Proof of Theorem~\ref{thm2413}]
First assume that condition (\textit{ii}) holds, i.e.\ that $k$ occurs to the right of $k{-}1$ in $\tau$.

For each $\sigma\in S_n$ and $i=1,\cdots,n{+}1,$ let $\sigma^i$ be the permutation in $S_{n+1}$ obtained by inserting $n+1$ between the $(i{-}1)^{th}$ and $i^{th}$ positions of $\sigma.$ That is, $\sigma^i=\sigma_1\cdots\sigma_{i-1}(n+1)\sigma_i\cdots\sigma_n.$ 

For a given $\sigma\in S_n(\tau)$, define
\[\W_n(\sigma)=\text{ the number of values of }i\text{ for which } \sigma^i \in S_{n+1}(\tau).\]
Let $\sigma \in S_n(\tau)$ and suppose $\sigma_i$ is a left-to-right maximum. 
Suppose that $\sigma^i$ contains a subsequence that forms the pattern $\tau$. 
Then the following must be true:

\begin{verse}
$\bullet$ For some $j<i$, $\sigma_j$ and $n+1$ must be the second-largest and  largest 
entries, respectively, in this subsequence.

$\bullet$ This subsequence cannot include $\sigma_i$, because $\sigma_i$ is a left-to-right maximum 
and thus $\sigma_j< \sigma_i$.
\end{verse}

But then
replacing $n+1$ with $\sigma_i$ in this subsequence of $\sigma^i$ would produce a 
subsequence of $\sigma$ that forms
the pattern $\tau$, which is a contradiction.
Hence $\sigma^i \in S_{n+1}(\tau).$ 

By the above argument, for any $\sigma \in S_n(\tau)$ we have 
\begin{equation}
\label{RLW}
\LR_n(\sigma)\leq \W_n(\sigma).
\end{equation}

As argued in Lemma 2.1 of \cite{ML}, we note that for each $\rho \in S_{n+1}(\tau)$, there is a unique $\sigma \in S_n(\tau)$ and a unique $i\in [n+1]$ such that $\sigma^i=\rho$. It follows that
\begin{equation*}
\sum_{\sigma \in S_n(\tau)}\W_n(\sigma)=S_{n+1}(\tau).
\end{equation*}
Dividing both sides by $|S_n(\tau)|$ gives us 
\begin{equation}
   \label{eq.EWSS}
     \E^{\tau}_n(\W_n)=\frac{|S_{n+1}(\tau)|}{|S_n(\tau)|}.
\end{equation}
From \cite{A}, we also have 
\begin{equation}
    \label{eq.Ltau}
L(\tau):=\lim_{n\to \infty}|S_n(\tau)|^{1/n}=\sup_{n\geq 1}|S_n(\tau)|^{1/n}.
\end{equation}
Hence  $\liminf_{n\rightarrow\infty}\E^{\tau}_n(\W_n)\leq L(\tau)$  by Equations 
(\ref{eq.EWSS}) and  (\ref{eq.Ltau}).
Together with Equation (\ref{RLW}), this proves Equation (\ref{eq.liminfE}). 
The final statement of Theorem \ref{thm2413} follows from Theorem~\ref{AtMad}. 

This concludes the proof of Theorem \ref{thm2413} under condition (\textit{ii}).
The theorem under condition (\textit{i}) will follow upon applying the bijection 
$B:\sigma\mapsto (\sigma^{-1})^{rc}$, which corresponds to reflection through the decreasing
diagonal.  We only require two observations:  
firstly,  that $\tau$ satisfies condition (\textit{i}) if and only if $B(\tau)$ satisfies condition (\textit{ii}),
and secondly that $\LR_n(\sigma) \,=\, \LR_n(B(\sigma))$ for every $\sigma\in S_n$ 
(indeed, $j=\sigma_i$ is a left-to-right maximum of $\sigma$ if and only if 
$(B(\sigma))_{n+1-j}=n+1-i$ is a left-to-right maximum of $B(\sigma)$).

In particular, for $k=4$, we conclude that $\E_n^{\tau}(\LR_n)$ is of order $n$ when $\tau_1=4$,
and is bounded for every other $\tau \in S_4$ except perhaps $2143$.
\end{proof}

For all nontrivial cases of pattern-avoidance that we have considered, the longest 
monotone subsequence (i.e., the  maximum of $\LIS_n$ and $\LDS_n$) is of order $n$ on average,
which contrasts with the order $\sqrt{n}$ that holds for the set of all permutations.
We don't yet know whether this order $n$ behaviour holds in general under pattern avoidance,
but this hypothesis is consistent with our limited simulation experiments so far. 
For example, Figure~\ref{table2413} summarizes some simulation results on the length 
of the longest increasing subsequence for random permutations in 
$S_n(2413)$. 
The data suggest that $\E^{2413}_n(\LIS_n)/n$ converges to a number between 0.2 and 0.25.

\begin{figure}[h!]
\begin{subfigure}{0.54\textwidth}
\begin{tabular}{ |c|c|c|}
\hline
$n$ & $\E^{2413}_n(\LIS_n)$ & $\E^{2413}_n(\LIS_n)/n$ \\
\hline
\hline
75& $22.3425\pm 0.3783$ & $0.2979\pm0.0050$ \\
\hline 
100& $29.1375\pm 0.5336$ & $0.2914\pm0.0053$ \\
\hline 
125& $35.0050\pm 0.5866$ & $0.2800\pm0.0047$ \\
\hline 
150& $39.9825\pm 0.7321$ & $0.2665\pm0.0049$ \\
\hline 
175& $46.4550\pm 0.8172$ & $0.2655\pm0.0047$ \\
\hline 
200& $51.5350\pm 0.8890$ & $0.2577\pm0.0044$ \\
\hline 
225& $56.3400\pm 1.0091$ & $0.2504\pm0.0045$ \\
\hline 
235& $59.3122\pm 1.0020$ & $0.2524\pm0.0043$ \\
\hline 
250& $62.9425\pm 1.1173$ & $0.2518\pm0.0045$ \\
\hline 
\end{tabular}
\end{subfigure}
\begin{subfigure}{0.46\textwidth}
\includegraphics[width=8cm, height=6.35cm]{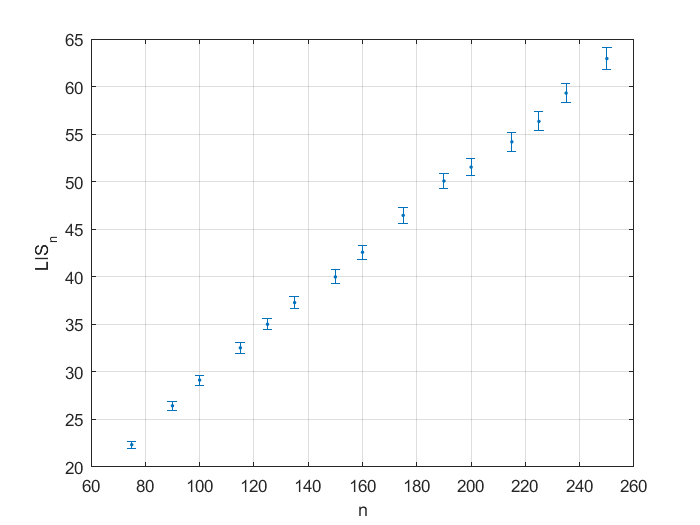}
\end{subfigure}
\caption{$95\%$ confidence interval on $\E^{2413}_n(\LIS_n)$ and $\E^{2413}_n(\LIS_n)/n$ for permutations avoiding
2413. For each $n$, we used a sample of  $400$ (approximately independent) permutations
generated by the Markov Chain Monte Carlo method of \cite{ML}. }
\label{table2413} 
\end{figure}

\begin{figure}[h!]
\begin{subfigure}{0.55\textwidth}
\includegraphics[width=7cm, height=7cm]{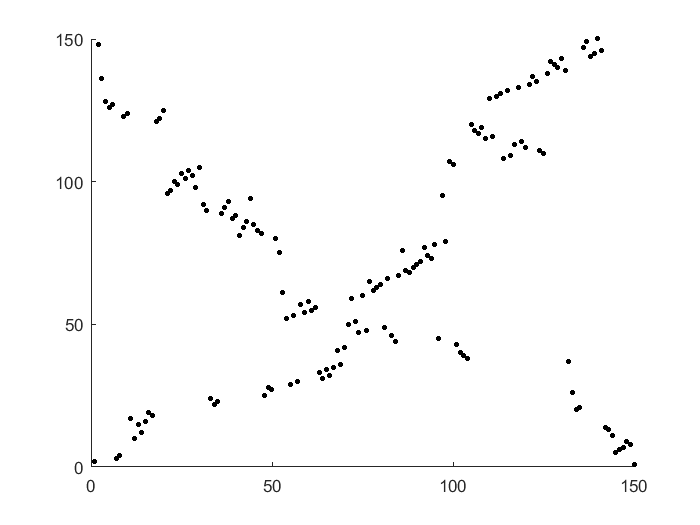}
\end{subfigure}
\begin{subfigure}{0.55\textwidth}
\includegraphics[width=7cm,height=7cm]{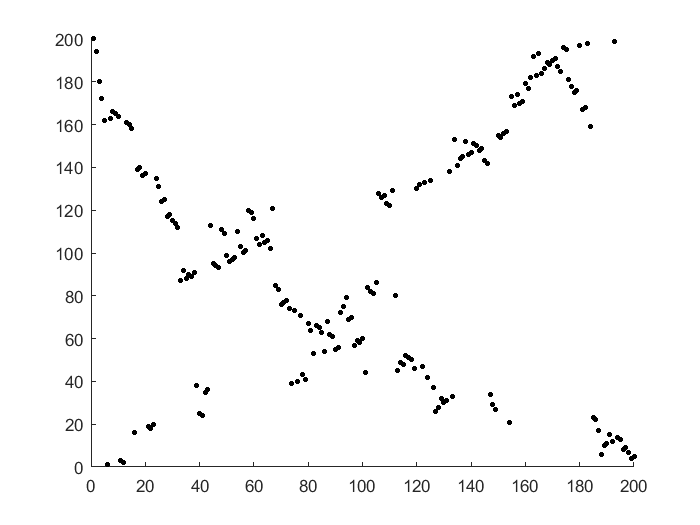} 
\end{subfigure}
\caption{Examples of randomly generated 2413-avoiding permutations in $S_{150}(2413)$ and 
$S_{200}(2413)$. These are reminiscent of plots given in \cite{BBFGP} of separable permutations, 
$S_n(2413,3142)$,  which appears to be a more tractable class.}
\label{fig2413}
\end{figure}

As described in Section \ref{LASsection}, it is known that the expected length of the longest
alternating subsequence is either bounded or else asymptotically proportional to $n/2$,
 for all cases of $S_n(T)$ with $T$ consisting of one or two patterns of length 3.  
 We do not know whether this extends to all longer patterns, but we can prove the following.
 
 \begin{theorem}
    \label{thm.alt4}
Consider $S_n(\tau)$ with $\tau\in S_k$ where $k\geq 4.$ Assume  that either
\begin{itemize}
\item[(a)]  $\tau_1=k$ and $\tau_2\neq k-1$, or 
\item[(b)]  $|\tau_i-\tau_{i+1}|>1$ for every $i\in[k-1]$. 
\end{itemize}
Then 
\[      \liminf_{n\rightarrow\infty} \frac{\E^{\tau}_n(\LAS_n)}{n}   \;\geq \;  \frac{2}{L(\tau)^2} \,.   \]
\end{theorem}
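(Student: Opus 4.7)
Following the template of Theorem~\ref{MainTh2} and Corollary~\ref{liscor}, the plan is to prove the stronger exponential estimate
\[\limsup_{n\to\infty}|S_n(\tau;\LAS_n<\delta n)|^{1/n} \;<\; L(\tau) \qquad \text{for every } 0<\delta<\tfrac{2}{L(\tau)^2},\]
from which the desired bound on $\E^\tau_n(\LAS_n)/n$ follows by the argument used to deduce Corollary~\ref{liscor} from Theorem~\ref{MainTh2}: simply write $\E^\tau_n(\LAS_n)\geq \delta n\cdot \bP^\tau_n(\LAS_n\geq \delta n)$ and note that the exceptional probability tends to $0$. The proof of this exponential bound adapts the insertion machinery of Theorem~\ref{GMainTh2}, but inserts \emph{pairs} of consecutive-rank values rather than single values, so that each insertion creates a local descent that contributes up to $2$ to $\LAS_n$; this is the source of the factor $2$ in $2/L(\tau)^2$.

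The central technical ingredient is a pair-insertion operation $\mathcal{I}_2(\sigma;h)$: first insert $h+1$ via $\mathcal{I}$ at position $J=\min\{j:\sigma_j\geq h+1\}$, and then insert $h$ at the adjacent position $J+1$ of the intermediate permutation. In the output, the two inserted values sit at adjacent positions forming a descent, with $h+1$ a new left-to-right maximum. The key claim is that $\mathcal{I}_2$ preserves $\tau$-avoidance under either hypothesis. If a hypothetical copy of $\tau$ in the output used both inserted values, they would occupy adjacent positions of the copy (hence play consecutive roles $\tau_a,\tau_{a+1}$) with consecutive ranks within the copy (no other copy value can lie between them), forcing $|\tau_a-\tau_{a+1}|=1$. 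Under hypothesis (b) this contradicts the assumption directly. Under hypothesis (a), the left-to-right maximum property of $h+1$ forces $a=1$ exactly as in the proof of Lemma~\ref{lem.12}, hence $\tau_2=k-1$, again a contradiction. The subcase where the copy uses only one of the inserted values reduces, under hypothesis (a), to Lemma~\ref{lem.12} applied to the remaining single insertion; under hypothesis (b) it requires a separate case analysis exploiting the position and rank separation imposed by $|\tau_i-\tau_{i+1}|>1$ for all~$i$.

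With the pair-insertion lemma in hand, I would mirror the proof of Theorem~\ref{GMainTh2}: for a large positive integer $M$ to be chosen later, partition the rank range $[n]$ into $r\approx n/M$ disjoint blocks of width $M$, and define
\[\Psi:\, S_n(\tau;\LAS_n<\delta n)\times [M]^r \to S_{n+2r}(\tau)\]
by applying $\mathcal{I}_2$ once within each block at a chosen rank. For $\sigma'\in\mathrm{image}(\Psi)$, the preimage count is at most $\prod_i b_i$, where $b_i$ counts the pairs of adjacent positions in $\sigma'$ that hold two consecutive ranks from the $i$th block and form a peak-valley. Each such configuration contributes a distinct interior peak of $\sigma'$, so $\sum_i b_i\leq\tfrac12\LAS_n(\sigma')+O(1)$; combined with the elementary bound $\LAS_n(\sigma')\leq \LAS_n(\sigma)+2r<\delta n+2r$, the AM--GM inequality $\prod_i b_i\leq(\sum_i b_i/r)^r$, and the Stanley--Wilf limit $|S_{n+2r}(\tau)|^{1/(n+2r)}\to L(\tau)$, one obtains
\[\limsup_{n\to\infty}|S_n(\tau;\LAS_n<\delta n)|^{1/n}\;\leq\; L(\tau)\left[\frac{L(\tau)^2(\delta M+2)}{2M}\right]^{1/M},\]
which is strictly less than $L(\tau)$ once $M$ is large enough, provided $\delta<2/L(\tau)^2$. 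The main obstacle is the pair-insertion lemma---specifically the ``only one inserted value appears in the copy'' subcase under hypothesis (b), where the left-to-right maximum structure available in hypothesis (a) is absent and must be replaced by a direct analysis of the copy's positions relative to the non-used inserted value.
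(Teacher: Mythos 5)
Your part (a) is essentially the paper's argument: the same pair-insertion $\mathcal{I}_2$ placing a new left-to-right maximum $h+1$ immediately followed by $h$, the same block decomposition of the height range, and the same preimage count via the adjacent descent-by-one pairs (the paper tracks these as the set $\{\sigma'_t : \sigma'_t$ a left-to-right maximum with $\sigma'_{t+1}=\sigma'_t-1\}$, which gives exactly your bound $\delta n+r$). The quantitative conclusion and the passage from the exponential estimate to the bound on $\E^\tau_n(\LAS_n)/n$ are also as in the paper.

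The gap is in part (b). You propose to reuse $\mathcal{I}_2$ and defer the subcase where a copy of $\tau$ uses exactly one of the two inserted points, calling it ``the main obstacle.'' That subcase is not merely hard: the claim is false. Take $\tau=2413$ (which satisfies hypothesis (b)), $\sigma=4132\in S_4(2413)$, and $h=3$. Then $J=\min\{j:\sigma_j\geq 3\}=1$ and $\mathcal{I}_2(\sigma;3)=4\,3\,6\,1\,5\,2$, whose subsequence $4,6,1,5$ at positions $1,3,4,5$ is an occurrence of $2413$ using only the first inserted point (the new left-to-right maximum $h+1=4$) and none of the structure of $\sigma$ is violated. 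The underlying reason is that when $\tau_1\neq k$, a freshly inserted left-to-right maximum is a perfectly good candidate for the role $\tau_1$ (here the ``2'' of $2413$), and there is no way to retract the copy back into $\sigma$ as in Lemma~\ref{lem.12}. The paper avoids this by using a \emph{different} insertion for case (b): it locates the existing entry of height exactly $h$ at some position $J$ and inserts the two new points at positions $J+1,J+2$ with heights $h+2,h+1$, so that the three points form a $132$ pattern occupying three contiguous heights and three contiguous positions. Any occurrence of $\tau$ touching a new point can then be pushed onto a neighbour within this triple, and one ends with two consecutive roles of $\tau$ realized by values of consecutive rank, contradicting $|\tau_i-\tau_{i+1}|>1$; the statistic counted in the preimage bound becomes the number of such $132$-triples with contiguous heights, each of which still contributes $2$ to $\LAS_n$. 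Your counting scheme and constants survive this substitution unchanged, but without replacing $\mathcal{I}_2$ by an insertion anchored to an existing entry, part (b) cannot be completed.
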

In particular, Theorem \ref{thm.alt4} proves that $\E^{\tau}_n(\LAS_n)/n$ is bounded away from 0 
for the patterns $\tau=4231$ (part (a)) and $\tau=2413$ (part (b)). 
Permutations which satisfy the condition in part (b) are called $1$-\textit{prolific} in \cite{BHT}.

The strategy of the proof is similar to that of Theorem \ref{GMainTh2}.

\begin{proof}[Proof of Theorem \ref{thm.alt4}] $(a)$
Assume $\tau\in S_k$ with $\tau_1=k$ and $\tau_2\neq k-1$.
It suffices to prove that for every positive real $\delta<L(\tau)^{-2}$, we have 
\begin{equation}
   \label{eq.LASdelta}
       \limsup_{n\rightarrow\infty}|S_n(\tau;\LAS_n<2\delta n)|^{1/n}   \,<\,L(\tau). 
\end{equation}

Suppose $\sigma \in S_n$ and $h \in [n].$ 
Let $J=\min\{j:\sigma_j\geq h\}.$ Define the permutation $\theta \in S_{n+2}$ as follows:
\begin{equation*}
\theta_i= \begin{cases}
\sigma_i &\text{ if } i<J\\
h+1 &\text{ if } i=J\\
h & \text{ if }i=J+1 \\
\sigma_{i-2}&\text{ if } i\geq J+2 \text{ and } \sigma_{i-2}<h\\
\sigma_{i-2}+2&\text{ if } i\geq J+2 \text{ and } \sigma_{i-2}\geq h\\
\end{cases}
\end{equation*}
We denote $\theta$ by $\mathcal{I}_2(\sigma;h).$ 
Think of  $\mathcal{I}_2(\sigma;h)$  as inserting two new points into $\sigma$ in a 21 pattern,
adjacent in location as well as in height, with the left point being  
a new left-to-right maximum at height $h+1$.

Assume $1\leq h_1<h_2<\cdots<h_r\leq n.$ Let $\sigma^{(r)}=\sigma$ and let 
$\sigma^{(l-1)}=\mathcal{I}_2(\sigma^{(l)};h_l)$ for $l=r,r-1,\cdots,1.$ 
Then we denote $\sigma^{(0)}$ by 
$\mathcal{I}_2(\sigma;h_1,h_2,\cdots,h_r).$ 
It is not hard to see that $\sigma^{(0)}$ avoids $\tau$, and that $\sigma^{(0)}$
has left-to-right maxima with heights $h_i+2i-1$ ($i\in [r]$).

Fix $0<\delta<L(\tau)^{-2}$ and choose a positive integer $M$ such that $\delta+\frac{1}{M} < L(\tau)^{-2}$.
Let $r\equiv r_n:=  \lfloor n/M\rfloor$.

For each $i\in [r],$ we define 
${\cal J}_i=((i-1)M,iM]$ (the \textit{``intervals of heights''}).

For each permutation $\sigma\in S_n$, define the set of heights
\[   {\cal A}^*(\sigma)  \;:=\;  \{\sigma_t:  \hbox{ $\sigma_t$ is a left-to-right maximum and }
   \sigma_{t+1}=\sigma_t-1\}\,.  \]
E.g., ${\cal A}^*(254367981)\,=\,\{5,9\}$.
Then it follows that  $\LAS_n(\sigma)  \geq 2|{\cal A}^*(\sigma)|$.  
Next, let 
\[   S_n^*(\tau)  \;:=\;  \{\sigma\in S_n(\tau):   |{\cal A}^*(\sigma)|<\delta n\}\,.   \]
Then we have
\begin{equation}
  \label{eq.SsubS}
     S_n(\tau;\LAS_n<2\delta n)  \;\subset \;  S^*_n(\tau)   \,. 
\end{equation}

Define the function $\Psi_2\equiv \Psi_{2,n}$ as
\[\Psi_2 : S_n^*(\tau)\times [M]^r \to S_{n+2r}(\tau)\]
such that 
$\Psi_2(\sigma,(\hat h_1,\hat h_2,\cdots,\hat h_r))\,=\,\mathcal{I}_2(\sigma;h_1,h_2,\cdots,h_r),$
where $h_i=\hat h_i +(i-1)M$ for $i\in[r]$ (so that $h_i\in {\cal J}_{i}$).
We also  define  the set of shifted intervals of heights as follows:
\[  
     {\cal J}^{\Psi}_i  \;:=\; {\cal J}_i+2i-1  \;=\; ((i-1)(M+2)+1,i(M+2)-1], \quad i=1,\cdots, r.\]
Then ${\cal J}^{\Psi}_i,\ldots,{\cal J}^{\Psi}_r$ are disjoint intervals in $[1,n+2r]$.

Given $\sigma' \in$ Image $ \Psi_2$, we would like to find an upper bound on the number of 
$(\bar \sigma,(\bar h_1,\bar h_2,\cdots,\bar h_r))$ in the domain of $\Psi_2$ such that 
\begin{equation}
   \label{eq.psisigbar}
   \Psi_2(\bar \sigma,(\bar h_1,\bar h_2,\cdots,\bar h_r))=\sigma' .
\end{equation}
For $i\in[r]$, let $b^*_i=|{\cal A}^*(\sigma')\cap {\cal J}^{\Psi}_i|$.
Observe that if Equation (\ref{eq.psisigbar}) holds, then 
$\bar{h}_i+(i-1)M+(2i-1)\in {\cal A}^*(\sigma')\cap {\cal J}^{\Psi}_i$. 
Note that $|{\cal A}^*(\sigma')|\leq \delta n+r$.
Therefore 
\[|\Psi_2^{-1}(\sigma')| \;\leq \;  \displaystyle \prod_{i=1}^{r}b^*_i  \;\leq \;  
   \left( \frac{\sum_{i=1}^rb^*_i}{r}\right)^r  \;\leq\; \left(\frac{\delta n+r}{r}\right)^r.\] 
Therefore, 
\[|S_n^*(\tau)| \, M^r \;=\; \sum_{\sigma'\in \text{Image } \Psi_2}|\Psi_2^{-1}(\sigma')|\; \leq \; 
   |S_{n+2r}(\tau)| \left(\frac{\delta n+r}{r}\right)^r,\]
and hence 
\[|S_n^*(\tau)| \, \leq \, \, L(\tau)^{n+2r}\left(\frac{\delta n+r}{Mr}\right)^r.\]
It follows  that 
\[\limsup_{n\to \infty}|S^*_n(\tau)|^{1/n} \; \leq \;   L(\tau)\left(L(\tau)^2
   \left(\delta +\frac{1}{M}\right)\right)^{1/M}\;<\;L(\tau).\]
Equation (\ref{eq.LASdelta}) follows from this and Equation (\ref{eq.SsubS}), and the 
result of part (a) is proved.

\medskip
\noindent

$(b)$ Assume $\tau\in S_k$ and $|\tau_i-\tau_{i-1}|>1$ for every $i\in[k-1]$.
Given $\sigma\in S_n(\tau)$ and $h\in[n]$, let ${\cal H}^*(\sigma;h)$ be the permutation $\theta$
defined as follows.  Let $J$ be the index such that $\sigma_J=h$ (note the distinction from 
part (a) here), and let
\begin{equation*}
\theta_i= \begin{cases}
\sigma_i &\text{ if } i\leq J \text{ and }\sigma_i\leq h\\
\sigma_i+2 &\text{ if } i\leq J \text{ and }\sigma_i > h\\
h+2 &\text{ if } i=J+1\\
h+1 & \text{ if }i=J+2 \\
\sigma_{i-2}&\text{ if } i > J+2 \text{ and } \sigma_{i-2}\leq h\\
\sigma_{i-2}+2&\text{ if } i > J+2 \text{ and } \sigma_{i-2}> h
\end{cases}
\end{equation*}
Then $\theta\in S_{n+2}(\tau)$.   In effect, ${\cal H}^*$ inserts two points into $\sigma$ to the right of
location $J$, so that $\sigma_J$ ($=\theta_J$) and the two new points 
($\theta_{J+1}$ and $\theta_{J+2}$) form a 132 pattern with three contiguous heights.
As in part (a), assume $1\leq h_1<h_2<\cdots<h_r\leq n.$ Let $\sigma^{(r)}=\sigma$ and let 
$\sigma^{(l-1)}=\mathcal{H}^*(\sigma^{(l)};h_l)$ for $l=r,r-1,\cdots,1.$ 
Then we denote $\sigma^{(0)}$ by $\mathcal{H}^*(\sigma;h_1,h_2,\cdots,h_r).$ 

For each permutation $\sigma$, we define the set of heights
\[    {\cal B}^*(\sigma) \;:=\; \{\sigma_t \,:\, \sigma_t=\sigma_{t+1}-2=\sigma_{t+2}-1\} \,.  \]
Then $\LAS_n(\sigma)\,\geq \,2|{\cal B}^*(\sigma)|$.  Arguing as in part (a) with the same
choice of $\delta$, $M$, and $r$, we obtain
\[    |S_n(\tau \,; \, |{\cal B}^*(\sigma)|<\delta n)|  \;\leq \;  \,L(\tau)^{n+2r}
     \left(\frac{\delta n+r}{Mr}\right)^r   \, \]
and the result of part (b) follows.
\end{proof}

\subsection{Rare regions for $S_n(\tau)$.}
\label{rsection}

As noted in Section \ref{sec-intrare}, plots of random permutations avoiding some patterns
have large regions that are usually empty (see Figure~\ref{regionfig}).
The results of this subsection relate to these regions. 

Without loss of generality, we shall assume for the rest of this subsection
that $\tau \in S_k$ and $\tau_1>\tau_k$.
Recall from Section \ref{sec-intrare} that the ``rare region'' $\mathcal{R}\equiv\mathcal{R}(\tau)$ 
is the set
\begin{align*}
\mathcal{R}=\{(x,y)\in [0,1]^2: \text{ for all sequences } \{(I_n,J_n)\}_{n\geq1}\text{ such that }(I_n,J_n)\in[n]^2\text{ and }\\ \lim_{n\to\infty}\left(\frac{I_n}{n},\frac{J_n}{n}\right)=(x,y), 
\text{ we have }\limsup_{n\to\infty}|S_n(\tau;\sigma_{I_n}=J_n)|^{1/n}<L(\tau) \},
\end{align*}
We also defined  $\mathcal{G}=[0,1]^2\setminus \mathcal{R}$ (the ``good region'') and 
$\mathcal{R}^{\uparrow}=\mathcal{R}\cap \{(x,y)\in [0,1]^2:y>x\}.$
For every $x\in [0,1]$, let
\[   r^{\uparrow}(x)  \,=\, \sup\{ y : (x,y)\in \mathcal{G}\}   \hspace{5mm}\hbox{and}\hspace{5mm}
   r^{\downarrow}(x)  \,=\, \inf\{ y : (x,y)\in \mathcal{G}\} \,.
\]
(We shall see that $r^{\uparrow}$ and $r^{\downarrow}$ are well-defined functions since
the set $\{ y : (x,y)\in \mathcal{G}\}$ is never empty.)

By Theorem \ref{thm.atm}, we know
that $\mathcal{R}^{\uparrow}\neq \emptyset$ 
if and only if $\tau_1=k$; 
in particular, when $\tau_1\neq k$, then $r^{\uparrow}$ is identically 1.
(Similarly, $r^{\downarrow}$ is identically 0 when $\tau_k\neq 1$.)
The other case, in which $\tau_1=k$, is addressed in the following theorem.
\begin{theorem}
\label{regionthm}
Assume $\tau=k\tau_2\cdots \tau_k \in S_k$.   
\begin{itemize}
\item[(a)]If $(x,y)\in \mathcal{G},$ then the convex hull of $\{(x,y),(0,0),(1,1)\}$ is contained in $\mathcal{G}.$  In particular, $\mathcal{G}$ contains the diagonal $\{(x,x):x\in [0,1]\}$.
\item[(b)]  
 $\mathcal{G} \,\subset \,  \{(x,y)\in[0,1]^2:  \,y\leq L(\tau)x \hbox{ and } y\leq 1-(1-x)/L(\tau)\}$.
\item[(c)]  The function $r^{\uparrow}$ satisfies $r^{\uparrow}(0)=0$, $r^{\uparrow}(1)=1$, and 
$r^{\uparrow}(x)\geq x$ for every $x\in (0,1)$.
\item[(d)]  The function $r^{\uparrow}$ is strictly increasing and Lipschitz continuous, 
with left and right derivatives contained in the interval $[L(\tau)^{-1},L(\tau)]$ at every point.
\end{itemize}
\end{theorem}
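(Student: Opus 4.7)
The plan is to establish (a) by a direct-sum construction, then (b) by invoking Theorem~\ref{GMainTh2} together with a symmetry bijection, and finally to deduce (c) and (d) from the wedge-and-triangle structure produced by (a) and (b). The main obstacle I anticipate is the second inequality of (b), since Theorem~\ref{GMainTh2} speaks only of left-to-right maxima and no obvious analogue holds at the $(1,1)$ corner when $\tau_1=k$ but $\tau_k\neq 1$; the argument below sidesteps this by conjugating $\tau$ through a bijection that swaps the two corners.

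For part (a), the hypothesis $\tau_1=k$ forces $\sigma_1\oplus\sigma_2\in S_{n_1+n_2}(\tau)$ whenever $\sigma_i\in S_{n_i}(\tau)$, because any occurrence of $\tau$ in the direct sum would have to place its maximum entry in the right block with all remaining entries to its right, which is impossible. Given $(x,y)\in\mathcal{G}$, I fix a subsequence $m_j\to\infty$ and positions $(I_{m_j},J_{m_j})$ with $(I_{m_j}/m_j,J_{m_j}/m_j)\to(x,y)$ and $|S_{m_j}(\tau;\sigma_{I_{m_j}}=J_{m_j})|^{1/m_j}\to L(\tau)$. A generic point of $\mathrm{conv}\{(x,y),(0,0),(1,1)\}$ has the form $(t+sx,\,t+sy)$ for some $s\in(0,1]$ and $t\in[0,1-s]$. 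I then consider triple direct sums $\phi_1\oplus\sigma\oplus\phi_2$ with $\phi_1\in S_{\ell_1}(\tau)$, $\sigma\in S_{m_j}(\tau;\sigma_{I_{m_j}}=J_{m_j})$, $\phi_2\in S_{\ell_2}(\tau)$, choosing integer sequences $\ell_1,\ell_2$ so that $\ell_1/N\to t$ and $m_j/N\to s$ where $N=\ell_1+m_j+\ell_2$. The distinguished entry sits at position $\ell_1+I_{m_j}$ with value $\ell_1+J_{m_j}$, whose scaled location tends to $(t+sx,\,t+sy)$; the count $|S_{\ell_1}(\tau)|\cdot|S_{m_j}(\tau;\sigma_{I_{m_j}}=J_{m_j})|\cdot|S_{\ell_2}(\tau)|$ has $N$-th root tending to $L(\tau)^{t}\cdot L(\tau)^{s}\cdot L(\tau)^{1-s-t}=L(\tau)$, so the limit point lies in $\mathcal{G}$.

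For the first inequality of part (b), if $\sigma\in S_n(\tau)$ and $\sigma_{I_n}=J_n$, then every left-to-right maximum of $\sigma$ of height at most $J_n$ sits at a position at most $I_n$: once $\sigma$ has attained a value $\geq J_n$, every later left-to-right maximum has strictly greater height. Hence there are at most $I_n$ such maxima. Assuming $y>L(\tau)x$, pick $\delta\in(x,\,y/L(\tau))$ and apply Theorem~\ref{GMainTh2} with the value interval $I=[1,J_n]$ (so $\alpha_1=0$, $\alpha_2=y$, and $\delta<(\alpha_2-\alpha_1)/L(\tau)$): this gives $\limsup_n|S_n(\tau;\LR_n[I]<\delta n)|^{1/n}<L(\tau)$. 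Since $I_n<\delta n$ eventually, $S_n(\tau;\sigma_{I_n}=J_n)$ is contained in the smaller set, so $(x,y)\in\mathcal{R}$. For the second inequality, I transport the first through the bijection $B\colon\sigma\mapsto(\sigma^{-1})^{rc}$. A direct check shows that $B$ maps $S_n(\tau)$ onto $S_n(B(\tau))$, sends the entry $(I_n,J_n)$ of $\sigma$ to $(n+1-J_n,\,n+1-I_n)$ in $B(\sigma)$, and satisfies $B(\tau)_1=k+1-(\tau^{-1})_k=k$ (using $\tau_1=k\Leftrightarrow(\tau^{-1})_k=1$) with $L(B(\tau))=L(\tau)$. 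Applying the first inequality to $B(\tau)$ at the scaled point $(1-y,1-x)$ gives $1-x\leq L(\tau)(1-y)$, which rearranges to the claimed bound.

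Parts (c) and (d) then follow cleanly. For (c), the diagonal lies in $\mathcal{G}$ by (a), giving both $r^{\uparrow}(x)\geq x$ and $r^{\uparrow}(1)=1$, while the first inequality of (b) forces any $(0,y)\in\mathcal{G}$ to satisfy $y\leq 0$, so $r^{\uparrow}(0)=0$. For (d), fix $0\leq x_1<x_2\leq 1$ and set $y_i=r^{\uparrow}(x_i)$ (attained using the closedness of $\mathcal{G}$ from Theorem~\ref{thm.boundary}). Part (a) places $\mathrm{conv}\{(x_2,y_2),(0,0),(1,1)\}$ inside $\mathcal{G}$; slicing this triangle at abscissa $x_1$ along the edge from $(x_2,y_2)$ to $(0,0)$ yields $y_1\geq(x_1/x_2)y_2$, hence $(y_2-y_1)/(x_2-x_1)\leq y_2/x_2\leq L(\tau)$ by the first bound of (b). Dually, slicing $\mathrm{conv}\{(x_1,y_1),(0,0),(1,1)\}$ at abscissa $x_2$ along the edge to $(1,1)$ yields $y_2\geq y_1+(1-y_1)(x_2-x_1)/(1-x_1)$, hence $(y_2-y_1)/(x_2-x_1)\geq(1-y_1)/(1-x_1)\geq 1/L(\tau)$ by the second bound of (b). These two-sided slope estimates deliver Lipschitz continuity, strict monotonicity, and the claimed bounds on left and right derivatives.
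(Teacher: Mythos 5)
Your proposal is correct and follows essentially the same route as the paper: direct sums (valid since $\tau_1=k$ makes $\tau$ sum-indecomposable) for the convex hull property, Theorem~\ref{GMainTh2} plus the reflection $\sigma\mapsto(\sigma^{-1})^{rc}$ for both halves of (b), and the resulting two-sided slope estimates for (c) and (d). The only differences are cosmetic — a single triple direct sum in place of the paper's iterated two-block sums, and a direct computation of the lower derivative bound where the paper cites the reflection argument.
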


Theorem \ref{regionthm}(b) implies that for every $\tau\in S_k$ with $\tau_1=k$, the closure of  
$\mathcal{R}^{\uparrow}(\tau)$ includes the point $(0,0).$  
This had been proven in Proposition 9.2 of \cite{AtM} under an additional assumption (called
``TPIP'') on $\tau_2\cdots \tau_k$. Theorem \ref{regionthm}(b) shows that
this additional assumption is unnecessary.

\begin{proof}[Proof of Theorem \ref{regionthm}] 
\textit{(a)} Note that $\{(0,0),(1,1)\}\subset \mathcal{G}$ because $|S_n(\tau;\sigma_1=1)|=|S_{n-1}(\tau)|$ and $|S_n(\tau;\sigma_n=n)|=|S_{n-1}(\tau)|$ for all $n\geq 1.$ 

Let $(x,y)\in\mathcal{G}.$ Then there exist $I_n, J_n\in[n]$ such that $(\frac{I_n}{n},\frac{J_n}{n})\to (x,y)$ and 
\[\limsup_{n\to\infty}|S_n(\tau;\sigma_{I_n}=J_n)|^{\frac{1}{n}} \;=\; L(\tau).\]
Let $t\in(0,1)$ and let $m:=m_n$ be a sequence of integers such that $\frac{n}{n+m}\to t$. Then it follows that $(\frac{I_n}{n+m},\frac{J_n}{n+m})\to (tx,ty)$ and 
\begin{align*}
|S_{n+m}(\tau;\sigma_{I_n}=J_n)|^{\frac{1}{n+m}} \;\geq\; 
  |S_n(\tau;\sigma_{I_n}=J_n)|^{\frac{1}{n}\frac{n}{n+m}}|S_m(\tau)|^{\frac{1}{m}\frac{m}{m+n}}
\end{align*}
(the inequality is proved using the injection 
$S_n(\tau)\times S_m(\tau)\hookrightarrow S_{n+m}(\tau)$ given by 
the direct sum (defined in Section \ref{sec-not})).

Hence we have
\[\limsup_{n\to \infty}|S_{n+m}(\tau;\sigma_{I_n}=J_n)|^{\frac{1}{n+m}}  \;\geq \; 
  L(\tau)^{t}L(\tau)^{1-t} \;=\; L(\tau).\]
Therefore $\mathit{l}_1:=\{(0,0)+t(x,y): 0<t<1\}\subset\mathcal{G}.$ By a similar argument, we 
can show that $\mathit{l}_2:=\{(1-t)(1,1)+t(x,y): 0<t<1\}\subset\mathcal{G}$ and 
$\mathit{l}_3:=\{(0,0)+t(1,1): 0<t<1\}\subset\mathcal{G}.$ Since any point in the 
convex hull of $\{(x,y),(0,0),(1,1)\}$ can be written as a linear combination of $(0,0)$ and 
a point from $\mathit{l}_2$, the result follows.

\medskip
\noindent
\textit{(b)} Assume $(x,y)\in[0,1]^2$ and $y>L(\tau)x$.
Assume that $\left(\frac{I_n}{n},\frac{J_n}{n}\right)\to(x,y)$.  
If $\sigma_I=J,$ then every left-to-right maxima $\sigma_i$ with $\sigma_i<J$ must satisfy $i<I.$ Therefore, 
\[S_n(\tau;\sigma_{I_n}=J_n) \;\subset\; S_n(\tau;\LR_n([1,J_n])<I_n).\]
In Theorem~\ref{GMainTh2}, let $\alpha_1=0,\alpha_2=y,$ and choose $\delta$ such that $x<\delta<\frac{y}{L(\tau)}.$ Then $S_n(\tau;\sigma_{I_n}=J_n)\subset S_n(\tau;\LR_n([1,J_n])<\delta n)$ for all sufficiently large $n$. 
Therefore, by Theorem \ref{GMainTh2}, we see that $(x,y)\in\mathcal{R}$.  Therefore  
$\mathcal{G}\subset \{(x,y)\in [0,1]^2:y\leq L(\tau)x\}$.  

To show the rest of (b),
consider the mapping $(x,y)\mapsto (1{-}y,1{-}x)$, which is the reflection through the line 
$x+y=1$.  This reflection corresponds to the bijection $\sigma\mapsto (\sigma^{-1})^{rc}$.
Letting $\tilde{\tau} = (\tau^{-1})^{rc}$, we have $L(\tilde{\tau})=L(\tau)$ and $\tilde{\tau}_1=k$.
Now suppose  $(x,y)\in\mathcal{G}(\tau)$.   Then $(1-y,1-x)\in \mathcal{G}(\tilde{\tau})$, and hence
$(1-x) \leq L(\tilde{\tau})(1-y)$ by the preceding paragraph.  Therefore $y\leq 1-(1-x)/L(\tau)$.  
This completes the proof of (b).

\medskip
\noindent
\textit{(c)} Since $(0,0)\in \mathcal{G}$, part (b) implies that $r^{\uparrow}(0)=0$.  The rest of
part (c) follows from part (a).
 
\medskip
\noindent
\textit{(d)} Strict monotonicity follows from the convex hull property of part (a), together with the fact 
(a consequence of (b)) that $r^{\uparrow}(x)<1$ for every $x\in(0,1)$. 
Next, observe that if $0<u<v<1$, then the convex hull property implies that the point 
$(u,r^{\uparrow}(u))$ cannot be below the line segment joining $(0,0)$ to 
$(v,r^{\uparrow}(v))$.  That is, $r^{\uparrow}(v)/v \,\leq \, r^{\uparrow}(u)/u$, and so 
\[   \frac{ r^{\uparrow}(v)-r^{\uparrow}(u)}{v-u}  \;\leq \;  
     \frac{ r^{\uparrow}(u)\left(\frac{v}{u}\right)-r^{\uparrow}(u)}{v-u}  \;=\;  \frac{r^{\uparrow}(u)}{u}
       \;\leq \;  L(\tau)  
\]
(where the final inequality follows from part (b)).  This proves the upper bound on the 
derivatives.  The lower bound follows using  the reflection argument from part (b). 
\end{proof}

Theorem \ref{thm.boundary} below holds for any pattern $\tau$.  Together with 
Theorem \ref{regionthm}, it shows that
the  graph of $r^{\uparrow}$ is precisely the boundary 
between $\mathcal{R}^{\uparrow}$ and 
$\mathcal{G}$ (when $\mathcal{R}^{\uparrow}$ is not empty).

\begin{theorem}
   \label{thm.boundary}
Let $\tau\in S_k$.  Then $\mathcal{G} \,=\, \{(x,y)\in [0,1]^2:  r^{\downarrow}(x)\leq y \leq 
   r^{\uparrow}(x)\}$. 
\end{theorem}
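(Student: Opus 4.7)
The plan is to show that for each $x_0 \in [0,1]$ the vertical slice $V(x_0) := \{y : (x_0, y) \in \mathcal{G}\}$ is a non-empty closed interval; since $r^{\downarrow}(x_0) = \inf V(x_0)$ and $r^{\uparrow}(x_0) = \sup V(x_0)$ by definition, this characterization is exactly the content of the theorem. The inclusion $\mathcal{G} \subseteq \{(x,y) : r^{\downarrow}(x) \leq y \leq r^{\uparrow}(x)\}$ is immediate from the definitions of $r^{\downarrow}$ and $r^{\uparrow}$, so only the reverse inclusion requires work.

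I would break the remaining argument into three assertions: (i) $V(x_0) \neq \emptyset$; (ii) $V(x_0)$ is an interval; (iii) $V(x_0)$ is closed. Both (i) and (ii) would draw on Theorem \ref{regionthm}(a). Although that theorem is stated for $\tau_1 = k$, its proof of part (a) only uses the direct and skew sum injections $S_n(\tau) \times S_m(\tau) \hookrightarrow S_{n+m}(\tau)$, which are valid whenever no nontrivial direct (resp.\ skew) sum decomposition of $\tau$ exists. Under the subsection's WLOG $\tau_1 > \tau_k$, neither decomposition is possible: any $\tau = \alpha \oplus \beta$ with $\alpha, \beta$ nonempty would force $\tau_1 \leq |\alpha| < \tau_k$, and similarly for skew sum. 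Hence the convex-hull conclusion of Theorem \ref{regionthm}(a) applies in our generality; in particular, the diagonal lies in $\mathcal{G}$, which handles (i) via $(x_0, x_0) \in V(x_0)$.

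For (ii), given $y_1 < y_2$ in $V(x_0)$, each triangle $H_i := \mathrm{conv}\{(0,0), (1,1), (x_0, y_i)\}$ is contained in $\mathcal{G}$. A short barycentric computation (writing a point in $H_i$ as $\alpha(0,0) + \beta(1,1) + \gamma(x_0, y_i)$ and solving the constraint $\beta + \gamma x_0 = x_0$) shows that $H_i \cap \{x = x_0\}$ is the segment from $(x_0, x_0)$ to $(x_0, y_i)$, i.e., $\{x_0\} \times [\min(x_0, y_i), \max(x_0, y_i)]$. A three-case analysis on the position of $x_0$ relative to $\{y_1, y_2\}$ (namely $x_0 \leq y_1$, $y_1 < x_0 < y_2$, and $x_0 \geq y_2$) then gives $\{x_0\} \times [y_1, y_2] \subseteq (H_1 \cup H_2) \cap \{x = x_0\} \subseteq \mathcal{G}$, so every intermediate $y$ lies in $V(x_0)$.

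For (iii), I would use a diagonal construction. Given $y_n \in V(x_0)$ with $y_n \to y$, each $y_n$ is witnessed by a sequence $(I^{(n)}_m, J^{(n)}_m)_{m \geq 1}$ satisfying $(I^{(n)}_m/m, J^{(n)}_m/m) \to (x_0, y_n)$ and $\limsup_m |S_m(\tau; \sigma_{I^{(n)}_m} = J^{(n)}_m)|^{1/m} = L(\tau)$. Picking an increasing subsequence $m_n$ along which the three errors $|I^{(n)}_{m_n}/m_n - x_0|$, $|J^{(n)}_{m_n}/m_n - y_n|$, and $L(\tau) - |S_{m_n}(\tau;\sigma_{I^{(n)}_{m_n}} = J^{(n)}_{m_n})|^{1/m_n}$ are all below $1/n$, and splicing these witnesses into a single sequence (defaulting at other indices to $(\lceil m x_0 \rceil, \lceil m y \rceil)$), produces a sequence scaling to $(x_0, y)$ whose limsup equals $L(\tau)$, so $y \in V(x_0)$. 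The main obstacle is step (ii): without additional structure, one cannot easily combine the witness sequences for $(x_0, y_1)$ and $(x_0, y_2)$ to produce one for an intermediate $(x_0, y)$. The convex-hull property of Theorem \ref{regionthm}(a) circumvents this elegantly by supplying an entire triangle of witnesses at once, but care is required to confirm that it applies in the full generality $\tau_1 > \tau_k$ needed here, and not merely under the stronger $\tau_1 = k$ hypothesis.
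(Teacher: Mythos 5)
Your proposal is correct and follows essentially the same route as the paper: the substance in both is the convex-hull property of Theorem~\ref{regionthm}(a) (which you correctly note extends from $\tau_1=k$ to all sum-indecomposable $\tau$, hence to all $\tau$ with $\tau_1>\tau_k$) combined with a closedness argument via a diagonal extraction and splicing of witness sequences; your slice-wise packaging of these two ingredients is only an organizational variant of the paper's proof, which instead proves that $\mathcal{G}$ itself is closed and disposes of the case $\tau_1\neq k$ separately via Theorem~\ref{thm.atm}(b). One small correction: your parenthetical claim that $\tau_1>\tau_k$ also rules out a skew-sum decomposition is false (e.g.\ $321=1\ominus 1\ominus 1$ and $3412=12\ominus 12$ both satisfy $\tau_1>\tau_k$), but this is harmless, since the proof of Theorem~\ref{regionthm}(a) uses only the direct-sum injection --- placing the auxiliary block in the northeast corner for the segment toward $(0,0)$ and in the southwest corner for the segment toward $(1,1)$ --- and that injection needs only sum-indecomposability, which $\tau_1>\tau_k$ does guarantee.
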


\begin{proof}[Proof of Theorem \ref{thm.boundary}]

Without loss of generality, assume $\tau_1>\tau_k$.  It suffices to show that 
for every $(x,y)\in[0,1]^2$ with $y\geq x$, we have $(x,y)\in\mathcal{G}$ if and only if 
$y\leq r^{\uparrow}(x)$. We already know this from the discussion preceding 
Theorem \ref{regionthm} when $\tau_1\neq k$, so assume $\tau_1=k$.
The desired result will follow from the convex hull property of Theorem \ref{regionthm}(a)
if we can prove that the point $(x,r^{\uparrow}(x))$ is in $\mathcal{G}$ for every
$x\in (0,1)$.  We shall accomplish this by proving that $\mathcal{G}$ is closed.

Proving that $\mathcal{G}$ is closed is essentially an exercise in analysis.  Here are the details.
Consider a point $(x,y)$ in the
closure of $\mathcal{G}$.  It suffices to construct a strictly increasing  sequence of 
natural numbers $n(1),n(2),\ldots$ and a sequence of pairs of integers 
$\{(i_{n(m)},j_{n(m)}):m\geq 1\}$ such that $(i_{n(m)},j_{n(m)})\in [n(m)]^2$ for every $m$,
$\lim_{m\rightarrow\infty}(i_{n(m)},j_{n(m)})/n(m) \,=\,(x,y)$, and 
$\lim_{m\rightarrow\infty}|S_{n(m)}(\tau;\sigma_{i_{n(m)}}=j_{n(m)})|^{1/n(m)} \,=\,L(\tau)$.
We shall construct the sequences inductively.  Let $n(1)=1=i_1=j_1$. Given $m>1$ and
$n(m-1)\in \mathbb{N}$, choose $(x(m),y(m))\in \mathcal{G}$ such that 
$||(x(m),y(m))-(x,y)||_1\,<\,1/m$. Then there exists a sequence $\{(I_n(m),J_n(m)):n\geq 1\}$
such that $(I_n(m),J_n(m))\in[n]^2$ for every $n$, 
\begin{align*}
  \lim_{n\rightarrow\infty}&\left(\frac{I_n(m)}{n},\frac{J_n(m)}{n}\right)\,=\,(x(m),y(m)),
\hspace{5mm}\hbox{and}\hspace{5mm}  \\
& \limsup_{n\rightarrow\infty}|S_n(\tau;\sigma_{I_n(m)}=J_n(m))|^{1/n} \,=\,L(\tau).
\end{align*}
Therefore we can choose $n'>n(m-1)$ such that 
\begin{align*}
 |S_{n'}&(\tau;\sigma_{I_{n'}(m)}=J_{n'}(m))|^{1/n'} \,\geq\,L(\tau)-\frac{1}{m} 
  \hspace{5mm}\hbox{and}\hspace{5mm} \\
  & \left\|  \left(\frac{I_{n'}(m)}{n'},\frac{J_{n'}(m)}{n'}\right)-(x(m),y(m))\right\|_1 \,<\,\frac{1}{m} \,. 
\end{align*}
Let $n(m)=n'$, $i_{n(m)}=I_{n'}(m)$, and $j_{n(m)}=J_{n'}(m)$.
One can now check that these sequences have the desired properties.  Hence
$(x,y)\in \mathcal{G}$.  Thus $\mathcal{G}$ is closed, and the theorem follows.
\end{proof} 

It is known that $r^{\uparrow}$ is the identity function $r^{\uparrow}(x)=x$
for the monotone pattern $\tau=k(k-1)\cdots 1$, as well as for some other patterns
\cite{MP}. So far, there is 
no pattern $\tau\in S_k$ with $\tau_1=k$ for which we can prove that 
$r^{\uparrow}$ is not the identity function.  Simulations are not yet clear
about whether $4231$ is one such pattern. See Figure~\ref{regionfig} as well as
\cite{Bevan1}. We conjecture that $r^{\uparrow}$ is always a concave function.

\section{Acknowledgments} 
The authors wish to thank a referee for many helpful comments which have significantly 
improved the presentation of the paper.

\end{document}